\documentclass[11pt]{amsart}
\usepackage{amscd,amssymb,url,colonequals,enumitem,xy}
\xyoption{all}

\frenchspacing

\newtheorem{thm}{Theorem}[section]
\newtheorem{prop}[thm]{Proposition}
\newtheorem{lem}[thm]{Lemma}
\newtheorem{cor}[thm]{Corollary}

\theoremstyle{definition}
\newtheorem{defn}[thm]{Definition}
\newtheorem{exa}[thm]{Example}

\theoremstyle{remark}
\newtheorem{rem}[thm]{Remark}

\numberwithin{equation}{section}

\newcommand{\op}{\operatorname}

\newcommand{\Ker}{\operatorname{Ker}}
\newcommand{\Gr}{\operatorname{Gr}}
\newcommand{\Char}{\op{char}}
\newcommand{\Lie}{\op{Lie}}

\newcommand{\Span}{\op{Span}}
\newcommand{\Rep}{\op{Rep}}
\newcommand{\rank}{\op{rank}}
\newcommand{\Split}{\op{Split}_k}
\newcommand{\ed}{\op{ed}}
\newcommand{\Id}{\op{Id}}

\newcommand{\Spec}{\op{Spec}}

\newcommand{\GL}{\op{GL}}

\newcommand{\Orth}{\op{O}}
\newcommand{\SO}{\op{SO}}

\newcommand{\Gal}{\op{Gal}}
\newcommand{\sep}[1]{#1_{\op{sep}}}
\newcommand{\alg}[1]{#1_{\op{alg}}}
\newcommand{\pcl}[1]{#1^{(p)}}
\newcommand{\Aut}{\op{Aut}}

\newcommand{\Gm}{\op{\mathbb{G}}_m}
\newcommand{\Hom}{\op{Hom}}

\newcommand{\Ex}{\op{Ex}}
\newcommand{\Ext}{\widetilde{\op{Ex}}}
\newcommand{\ZZ}{\mathbb{Z}}
\newcommand{\GG}{\mathbb{G}}
\newcommand{\FF}{\mathbb{F}}
\newcommand{\Fields}{\op{Fields}}
\newcommand{\Sets}{\op{Sets}}

\newcommand{\R}{\op{R}}

\newcommand{\Stab}{\op{Stab}}

\newcommand{\gap}{\op{gap}}
\makeatletter
\DeclareRobustCommand*{\bfseries}{%
   \not@math@alphabet\bfseries\mathbf
   \fontseries\bfdefault\selectfont
   \boldmath
 }
\makeatother

\usepackage{hyperref}


\begin{document}
\title[Essential $p$-dimension]{Essential $p$-dimension 
of algebraic groups whose connected component is a torus}

\author[R. L\"otscher]{Roland L\"otscher$^{(1)}$}
\address{Mathematisches Institut, Universit\"at M\"unchen, 80333
M\"unchen, Germany}
\email{roland.loetscher@mathematik.uni-muenchen.de}
\thanks{$^{(1)}$ Supported by the Deutsche Forschungsgemeinschaft, GI 706/2-1.}

\author[M. M{\tiny{ac}}Donald]{Mark M{\tiny{ac}}Donald$^{(2)}$}
\address{Department of Mathematics, University of British Columbia,
Vancouver V6T1Z2, Canada}
\email{mlm@math.ubc.ca}
\thanks{$^{(2)}$ Partially supported by a postdoctoral fellowship
from the Canadian National Science and Engineering 
Research Council (NSERC)}

\author[A. Meyer]{Aurel Meyer$^{(3)}$}
\address{D\'epartment de Math\'ematiques, Universit\'e Paris-Sud 11,
91405 Orsay, France}
\email{aurel.meyer@math.u-psud.fr}
\thanks{$^{(3)}$ Supported by a postdoctoral fellowship
from the Swiss National Science Foundation}

\author[Z. Reichstein]{Zinovy Reichstein$^{(4)}$}
\address{Department of Mathematics, University of British Columbia,
Vancouver V6T1Z2, Canada}
\email{reichst@math.ubc.ca}
\thanks{$^{(4)}$ Partially supported by a Discovery grant 
from the Canadian National Science and Engineering 
Research Council (NSERC)}

\subjclass[2000]{20G15, 11E72}


\keywords{Essential dimension, algebraic torus, twisted finite group, generically free representation}

\begin{abstract}
Following up on our earlier work and 
the work of N.~Karpenko and A.~Merkurjev, we study the 
essential $p$-dimension of linear algebraic groups $G$ whose 
connected component $G^0$ is a torus.
\end{abstract}

\maketitle
\tableofcontents

\section{Introduction}
\label{sec:introduction} 
Let $p$ be a prime integer and $k$ a base field of characteristic $\neq p$. 
In this paper we will study the essential dimension
of linear algebraic $k$-groups $G$ whose connected component $G^0$ is 
an algebraic torus.  This is a natural class of groups; for 
example, normalizers of maximal tori in reductive linear 
algebraic groups are of this form.
We will use the notational conventions of \cite{LMMR}.
For background material and further references on the notion 
of essential dimension, see~\cite{Re2}.

For the purpose of computing $\ed(G; p)$ we may replace 
the base field $k$ by a $p$-special closure $\pcl{k}$.
A $p$-special closure of $k$ is a maximal directed limit 
of finite prime to $p$ extensions
over $k$; for details see Section~\ref{sec:primeToPClosure}. The 
resulting field $k=\pcl{k}$ is then \emph{$p$-special}, i.e.~every 
field extension of $k$ has $p$-power degree. (Some authors use the 
term ``$p$-closed" in place of ``p-special".)
Furthermore, the finite group $G/G^0$ has a Sylow $p$-subgroup
$F$ defined over $k=\pcl{k}$; see~\cite[Remark 7.2]{LMMR}.
Since $G$ is smooth we may replace $G$ by the preimage of $F$ 
without changing the essential $p$-dimension; see~\cite[Lemma 4.1]{LMMR}. 
It is thus natural to restrict our attention to 
the case where $G/G^0$ is a finite $p$-group, 
i.e., to those $G$ which fit into an exact sequence
of $k$-groups of the form
\begin{equation} \label{e.basic-sequence}
 1 \to T \to G  \xrightarrow{\; \pi \;} F \to  1 \, ,
\end{equation}
where $T$ is a torus and $F$ is a smooth finite
$p$-group.  Note that $F$ may be twisted (i.e. non-constant) 
and $T$ may be non-split over $k$.  Moreover,
the extension~\eqref{e.basic-sequence} 
is not assumed to be split either.

To state our main result, recall that a linear representation
$\rho \colon G \to \GL(V)$ is called {\em generically free} if there exists
a $G$-invariant dense open subset $U \subset V$ such that
the scheme-theoretic stabilizer of every point of $U$ is trivial.
A generically free representation is clearly faithful but 
the converse does not always hold; see below.
We will say that $\rho$ is $p$\textit{-generically free} 
(respectively, $p$\textit{-faithful}) if
$\ker \rho$ is finite of order prime to $p$, and $\rho$ descends
to a generically free (respectively, faithful) representation of $G/\ker\rho$. 

\begin{thm} \label{thm.MainTheorem}
Let $G$ be an extension of a (possibly twisted)
finite $p$-group $F$ by an algebraic torus $T$
defined over a $p$-special field $k$ of characteristic $\neq p$. Then
\[
\min \dim \rho - \dim G \le \ed(G;p) \le \min \dim \mu - \dim G \, ,
\]
where the minima are taken over all $p$-faithful linear
representations $\rho$ of $G$ and $p$-generically free
representations $\mu$ of $G$, respectively.
\end{thm}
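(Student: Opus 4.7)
The theorem splits naturally into two inequalities, and I would tackle them separately.

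\textbf{Upper bound.} Let $\mu: G \to \GL(V)$ be a $p$-generically free representation, and set $N = \ker \mu$; by hypothesis $N$ is finite of order prime to $p$ and $\mu$ descends to a generically free representation $\bar\mu$ of $G/N$. Restricting to the dense open subset on which $G/N$ acts freely produces a versal $(G/N)$-torsor whose base has dimension $\dim V - \dim(G/N) = \dim V - \dim G$, giving $\ed(G/N) \le \dim V - \dim G$. To pass from $G/N$ back to $G$, I would use the fact that $N$ and every twisted form $N^{\gamma}$ have order prime to $p$ while $k$ is $p$-special: then $H^{1}(K, N^{\gamma}) = \ast$ for every $p$-special extension $K/k$, so each fiber of $H^{1}(K, G) \to H^{1}(K, G/N)$ is trivial and the map is a bijection. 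Hence $\ed(G; p) = \ed(G/N; p) \le \ed(G/N) \le \dim V - \dim G$.

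\textbf{Lower bound.} Writing $e := \ed(G; p)$, we must produce a $p$-faithful representation of $G$ of dimension at most $e + \dim G$. My plan is to start from an arbitrary generically free representation $W$ of $G$ (one exists because $G$ is linear algebraic). The action of $G$ on $W$ yields a versal $G$-torsor $W \dashrightarrow W/G$ with base of transcendence degree $\dim W - \dim G$; by the definition of $\ed(G; p)$, after a prime-to-$p$ base change this torsor compresses to one over a base of transcendence degree $e$. The crux is then to realize this compressed torsor as arising from a small $p$-faithful representation of $G$. For the extension~\eqref{e.basic-sequence}, the character lattice $T^{*}$ of $T$, viewed with its $F$- and Galois-actions, parametrizes characters restricting to $T$, while for the twisted finite $p$-group $F$ a Karpenko--Merkurjev-style analysis constructs representations of controlled dimension. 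Assembling these two ingredients along the exact sequence should yield a $p$-faithful $G$-representation of dimension at most $e + \dim G$.

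\textbf{Main obstacle.} I expect the lower bound to be the serious step. The difficulty is translating the purely geometric invariant $\ed(G; p)$ into a representation-theoretic count. For finite $p$-groups this is the Karpenko--Merkurjev theorem, whose proof relies on gerbe/Brauer-class arguments at the center of $G$. Extending these techniques through the sequence~\eqref{e.basic-sequence} with a non-trivial torus factor, and keeping track of the possibly non-split twisted structure on $F$ (including the fact that $F$ need not be constant and the extension need not split), is where I anticipate the main technical work to lie.
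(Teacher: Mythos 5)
Your upper bound is essentially the paper's argument: kill the prime-to-$p$ kernel $N=\ker\mu$, observe that $H^1(K,N)$ and $H^1(K,{}_cN)$ vanish over $p$-special fields so that $H^1(-,G)\to H^1(-,G/N)$ is bijective (the paper packages this as Proposition~\ref{prop.edQuotient}; note that surjectivity needs the vanishing of the relevant cohomological dimensions, not just triviality of the fibers, but this is available since $\Gal(\sep{K}/K)$ is pro-$p$), and then use the generically free representation of $G/N$ to bound $\ed(G/N)$. That half is fine.

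The lower bound, however, is a genuine gap: what you have written is a statement of intent, not an argument, and the one concrete mechanism you propose --- compress the versal torsor $W\dashrightarrow W/G$ to transcendence degree $e$ and then ``realize the compressed torsor as arising from a small $p$-faithful representation'' --- is not how the Karpenko--Merkurjev method works and there is no reason a compressed torsor should come from a linear representation at all. The actual proof applies the central-extension lower bound (\cite[Theorem 3.1]{LMMR}, reproduced as Theorem~\ref{thm:lowerbound}) to the sequence $1\to C(G)\to G\to Q\to 1$, where $C(G)=\Split(Z(G)[p])\simeq\mu_p^r$ is the maximal split $p$-torsion subgroup of the center; this yields $\ed(G;p)\ge\min\dim\psi-\dim G$ over representations $\psi$ whose restriction to $C(G)$ is faithful. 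Two nontrivial inputs are then needed, neither of which appears in your sketch: (i) the hypothesis~\eqref{e3.1} of that theorem, verified by showing that every irreducible representation of $G$ has $p$-power dimension (Proposition~\ref{prop3.2}) --- this in turn requires approximating $G$ by an ascending chain of finite $p$-subgroups, whose existence rests on the fact that $\Ext^1(F,T)$ is killed by a power of $|F|$ so that $G$ contains a finite $p$-subgroup surjecting onto $F$ (Lemma~\ref{lem.schneider}); and (ii) the equivalence ``$\rho$ is $p$-faithful $\iff$ $\rho|_{C(G)}$ is faithful'' (Proposition~\ref{prop.main2}), which converts the bound over representations faithful on $C(G)$ into the stated bound over $p$-faithful representations and is itself a delicate fixed-point argument. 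Without identifying $C(G)$ and supplying (i) and (ii), the lower bound does not go through.
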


If $G = G^0$ is a torus or $G = F$ is a finite $p$-group then 
the upper and lower bounds of Theorem~\ref{thm.MainTheorem} coincide
(see,~\cite[Lemma 2.5]{LMMR} and ~\cite[Remark 2.1]{mr1}, 
respectively). In these cases Theorem~\ref{thm.MainTheorem}
reduces to~\cite[Theorems 1.1 and 7.1]{LMMR}, respectively.
In the case of constant finite $p$-groups this result is due 
to N.~Karpenko and A.~Merkurjev, whose work~\cite{KM} was 
the starting point for both~\cite{LMMR} and the present paper.
We will show that the upper and lower bounds 
of Theorem~\ref{thm.MainTheorem} coincide for 
a larger class of groups, which we call {\em tame};
see Definition~\ref{def.tame} and Theorem~\ref{thm.tame}(b) below.  

In general, groups $G$ of the form~\eqref{e.basic-sequence} 
may have faithful (respectively, $p$-faithful) 
representations which are not generically free (respectively,
$p$-generically free). This phenomenon is not well understood;
there is no classification of such representations,
and we do not even know for which groups $G$ they 
occur 
\footnote{More is known about faithful representations which
are not generically free in the case where $G$ is connected semisimple.
For an overview of this topic, see~\cite[Section 7]{popov-vinberg}.}.
It is, however, the source of many of the subtleties 
we will encounter.

To give the reader a better feel for this phenomenon, let us briefly
consider the following ``toy" example. Let $k = \mathbb C$, $p = 2$ 
and $G = \Orth_2$, the group of $2 \times 2$ orthogonal matrices.
It is well known that $G \simeq \mathbb \GG_m \rtimes \mathbb Z/ 2 \mathbb Z$,
where $G^0 = \SO_2 = \mathbb \GG_m$ is a 1-dimensional torus.
It is easy to see that the natural representation 
$i \colon G \hookrightarrow \GL_2$ is the 
unique $2$-dimensional faithful representation of $G$.
However this representation is not generically free: the stabilizer
$\Stab_G(v)$ of every anisotropic vector $v = (a, b) \in \mathbb C^2$
is a subgroup of $G = \Orth_2$ of order $2$ generated by the reflection in the line 
spanned by $v$. Here ``anisotropic" means $a^2 + b^2 \neq 0$;
anisotropic vectors are clearly Zariski dense in $\mathbb C^2$.
On the other hand, the 3-dimensional 
representation $i \oplus \det$ is easily seen to be 
generically free. Here $\det$ 
denotes the one-dimensional representation $\det \colon \Orth_2 \to \GL_1$.  
Since $\dim G = 1$,
Theorem~\ref{thm.MainTheorem} yields $1 \le \ed(\Orth_2; 2) \le 2$. 
The true value of $\ed(\Orth_2; 2)$ is $2$; see~\cite[Theorem 10.3]{Re1}.

We now proceed to state our result about the gap between the upper
and lower bounds of Theorem~\ref{thm.MainTheorem}.
The group $C(F)$ which appears in the definition below is 
the maximal split $p$-torsion subgroup of the center of $F$; for 
a precise definition, see Section~\ref{sect.proof-main-thm}.

\begin{defn} \label{def.tame}
Let $G$, $T \colonequals G^0$ and $\pi \colon G \to F \colonequals G/T$ 
be as in~\eqref{e.basic-sequence}. 
Consider the natural (conjugation) action of $F$ on $T$.
We say that $G$ is {\em tame} if $C(F)$ lies in the kernel of this action. 
Equivalently, $G$ is tame if $T$ is central in $\pi^{-1}(C(F))$.
\end{defn}

For any group $G$ as in~\eqref{e.basic-sequence} over a $p$-special field $k$, we define $\gap(G;p)$ to be the difference between the dimensions of a minimal $p$-faithful $G$-representation, and a minimal $p$-generically free $G$-representation. This is precisely the ``gap'' between the upper and lower bounds in 
Theorem~\ref{thm.MainTheorem}.

\begin{thm} \label{thm.tame}
Let $k$ be a $p$-special field of characteristic 
$\neq p$ and $G$ be as in~\eqref{e.basic-sequence}. Then

\smallskip
(a) $\gap(G;p) \leq \dim \, T - \dim \, T^{C(F)}$.

\smallskip
(b) If $G$ is tame then $\gap(G;p)=0$, 
i.e.  $$\ed(G;p)=\min \dim \rho - \dim G,$$ where the minimum 
is taken over all $p$-faithful $k$-representations of $G$.
\end{thm}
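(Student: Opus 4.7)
Part (b) is an immediate consequence of part (a): in the tame case $T^{C(F)}=T$ by definition, so (a) forces $\gap(G;p)\le 0$, hence $\gap(G;p)=0$, and combining with Theorem~\ref{thm.MainTheorem} yields the displayed formula for $\ed(G;p)$. I therefore focus on (a). Given a $p$-faithful representation $\rho\colon G\to\GL(V)$, the goal is to construct a $G$-representation $W$ with $\dim W\le\dim T-\dim T^{C(F)}$ such that $\rho\oplus W$ is $p$-generically free; taking $\rho$ of minimal dimension then gives $\gap(G;p)\le\dim W$.

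First I would identify the obstruction. Since $\rho$ is $p$-faithful, the scheme-theoretic generic stabilizer $S$ has its identity component contained in $\bigcap_\chi\ker\chi$, the intersection over the $T$-weights of $V$, which is finite of order prime to $p$. Thus $S$ is itself a finite group scheme with $S\cap T$ of order prime to $p$, and the sole obstruction to $p$-generic freeness is the image $\bar{S}\subseteq F$, a finite $p$-subgroup.

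Next, the fixed subtorus $T^{C(F)}$ is $G$-stable: because $C(F)$ is central in $F$ and $T$ acts trivially on itself by conjugation, conjugation by any $g\in G$ sends $C(F)$-fixed points of $T$ to $C(F)$-fixed points. Hence $T':=T/T^{C(F)}$ is a $G$-equivariant torus of dimension $\dim T-\dim T^{C(F)}$. I would build $W$ as a $G$-representation of dimension at most $\dim T'$ by combining $F$-equivariant character-lattice data on $T'$ with pullbacks from the quotient group $G/T^{C(F)}$, using the extension and lifting machinery for groups of the form~\eqref{e.basic-sequence} developed in~\cite{LMMR} and in the preceding sections of the paper. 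The construction is to be arranged so that any nontrivial lift of an element of $\bar{S}$ to $G$ acts nontrivially on $W$, and hence cannot fix a generic vector of $\rho\oplus W$; this eliminates the obstruction.

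The main technical obstacle is precisely this last step: producing $W$ of exactly the prescribed dimension $\dim T-\dim T^{C(F)}$, rather than overshooting with a naive induction, and arranging its $G$-equivariance so that it genuinely cancels $\bar{S}$ in the direct sum. This is expected to require a delicate combination of lattice-theoretic arguments on the character group $T^*$, an inductive analysis using the central $p$-torsion subgroup $C(F)$, and a descent argument over the $p$-special base field $k$.
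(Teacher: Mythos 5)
Your reduction of (b) to (a) is exactly the paper's: tameness gives $T^{C(F)}=T$, so (a) forces $\gap(G;p)=0$, and the formula for $\ed(G;p)$ then follows from Theorem~\ref{thm.MainTheorem}. The overall shape of your attack on (a) is also right: starting from a minimal $p$-faithful $\rho$ one wants a $G$-representation $W$ of dimension $\dim T-\dim T^{C(F)}$ such that $\rho\oplus W$ is $p$-generically free, and your observation that $T^{C(F)}$ is $G$-stable is correct and is used in the paper. But the proof stops there: you never construct $W$, and you say yourself that producing it is ``the main technical obstacle'' requiring ``delicate'' lattice-theoretic and descent arguments. That is the content of the theorem, so as it stands this is a gap, not a proof. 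For the record, the paper's $W$ requires none of the machinery you invoke: since the conjugation action of $G$ on $\Lie(T)$ factors through the finite group $F$ and $\Char k\neq p$, Maschke's theorem gives a $G$-stable complement $W$ of $\Lie(T^{C(F)})$ inside the adjoint representation $\Lie(T)$, and that is the whole construction (Proposition~\ref{prop.gap}).

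The second missing piece is the verification that $\rho\oplus W$ is generically free, and here your framing contains a real error in conception. You speak of ``the'' generic stabilizer $S$ and its image $\bar S\subseteq F$ as if it were a single subgroup of $G$ defined over $k$ that one could kill by choosing $W$ appropriately. But the stabilizers of points in a dense open set form a varying family of (conjugate, or worse) subgroups -- this is precisely the phenomenon in the $\Orth_2$ example of the introduction, where every anisotropic vector has a different order-$2$ stabilizer. The paper handles this by first reducing generic freeness of the $G$-action to generic freeness of the $G_{C(F)}$-action (Proposition~\ref{prop.tame}, which rests on Proposition~\ref{prop.C(G)}), then showing that the kernel $K$ of the $G_{C(F)}$-action on $\Lie(T)$ has $T$ central in it (a finite $p$-group acting trivially on the tangent space at the identity of $T$ acts trivially on $T$ in characteristic $\neq p$), so that Lemma~\ref{lem.gap} applies: when $T$ is central there are only \emph{finitely many} subgroups meeting $T$ trivially, and one can delete their fixed loci to get generic freeness of $K$ on $V$. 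Finally $G_{C(F)}/K$ is finite and acts faithfully, hence generically freely, on $W$, and the two statements are combined via \cite[Lemma 3.2]{mr1}. None of these steps is present or replaceable by the argument you sketch, so the proposal does not establish part (a).
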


In many cases the lower bound of Theorem~\ref{thm.MainTheorem} is 
much larger than $\dim T$, so Theorem~\ref{thm.tame}(a) may be 
interpreted as saying that the gap between the lower and upper bounds
of Theorem~\ref{thm.MainTheorem} is not too wide, even if $G$ is not tame.

As a consequence of Theorem~\ref{thm.MainTheorem}, 
we will also prove the following ``Additivity Theorem".

\begin{thm} \label{thm.additive}
Let $k$ a $p$-special field of characteristic $\neq p$ and $G_1$, $G_2$ be 
groups such that $\gap(G_1;p)= \gap(G_2;p)= 0$. 
Then $\gap(G_1 \times G_2;p)=0$ and
$\ed(G_1 \times G_2; p) = \ed(G_1; p) + \ed(G_2; p)$.
\end{thm}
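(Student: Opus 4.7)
The plan is to prove the upper and lower bounds of $\ed(G_1 \times G_2; p) = \ed(G_1; p) + \ed(G_2; p)$ separately, with $\gap(G_1 \times G_2; p) = 0$ falling out as a byproduct.

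For the upper bound, I would pick minimal $p$-generically free representations $\mu_i \colon G_i \to \GL(V_i)$ of dimension $\ed(G_i; p) + \dim G_i$; such $\mu_i$ exist by the hypothesis $\gap(G_i; p) = 0$. The ``external'' direct sum $\mu \colon G_1 \times G_2 \to \GL(V_1 \oplus V_2)$, in which $(g_1, g_2)$ acts by $\mu_1(g_1) \oplus \mu_2(g_2)$, has kernel $\ker \mu_1 \times \ker \mu_2$ (finite of order prime to $p$), and the generic stabilizer at $(v_1, v_2)$ with $v_i \in V_i$ generic is the product $\Stab_{G_1}(v_1) \times \Stab_{G_2}(v_2) = \ker\mu_1 \times \ker\mu_2$ because $G_1$ and $G_2$ act independently on the two summands. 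Thus $\mu$ is $p$-generically free, and Theorem~\ref{thm.MainTheorem} yields
\[
\ed(G_1 \times G_2; p) \leq \dim \mu - \dim(G_1 \times G_2) = \ed(G_1; p) + \ed(G_2; p).
\]

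For the lower bound, Theorem~\ref{thm.MainTheorem} gives $\ed(G_1 \times G_2; p) \geq \min_\rho \dim \rho - \dim(G_1 \times G_2)$, where $\rho$ ranges over $p$-faithful representations of $G_1 \times G_2$. Combined with the gap-$0$ hypotheses (which imply $\min_{\rho_i} \dim \rho_i = \ed(G_i; p) + \dim G_i$), it suffices to establish the following additivity claim: for every $p$-faithful representation $V$ of $G_1 \times G_2$, $\dim V \geq \min \dim \rho_1 + \min \dim \rho_2$, with the right-hand minima taken over $p$-faithful representations of $G_1$ and $G_2$. My approach would be to decompose $V$ into $G_1 \times G_2$-isotypic summands (valid by linear reductivity in characteristic $\neq p$) and to allocate summands between $G_1$- and $G_2$-subrepresentations $V_1, V_2$ so that each $V_i$ is $p$-faithful for $G_i$ and $\dim V_1 + \dim V_2 \leq \dim V$.

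The main obstacle lies in the ``mixed'' isotypic components, namely those of type $W_1 \boxtimes W_2$ with both $W_i$ nontrivial. Such components are needed to secure $G_i$-faithfulness for both $i = 1, 2$, so a naive disjoint split breaks, while a shared allocation would double-count dimensions. The resolution should exploit the $p$-special hypothesis on $k$ together with the presence of the central torus: a single mixed isotypic component on its own tends to fail $p$-faithfulness (its kernel meets $T_1 \times T_2$ in a positive-dimensional subgroup cut out by a relation between central characters), so any $p$-faithful $V$ must contain a sufficiently rich ``one-sided-trivial'' subspace to feed both $V_1$ and $V_2$ without overdrawing. Once the additivity claim is proven, both conclusions of the theorem follow: the sandwich of inequalities pins down $\min \dim \rho = \min \dim \mu = \ed(G_1;p) + \ed(G_2;p) + \dim(G_1 \times G_2)$, forcing $\gap(G_1 \times G_2;p) = 0$.
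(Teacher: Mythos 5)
Your upper bound is correct and is essentially what the paper does: the external direct sum of minimal $p$-generically free representations of $G_1$ and $G_2$ is $p$-generically free for $G_1\times G_2$. The problem is the lower bound. You have correctly isolated the crux --- the additivity claim that every $p$-faithful representation of $G_1\times G_2$ has dimension at least the sum of the minimal $p$-faithful dimensions of the factors --- and you have correctly identified the obstacle (mixed isotypic components $W_1\boxtimes W_2$ that contribute to faithfulness on both sides). But your proposed resolution is not a proof: the assertion that ``a single mixed isotypic component on its own tends to fail $p$-faithfulness'' and that $V$ must therefore contain enough one-sided-trivial pieces is a heuristic with no mechanism behind it, and an allocation/splitting strategy along these lines does not obviously close. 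As written, the lower bound is a genuine gap.

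The paper closes this gap by first replacing ``$p$-faithful'' with ``faithful on the finite split central $p$-subgroup $C(G)$'' via Proposition~\ref{prop.main2}, together with $C(G_1\times G_2)=C(G_1)\times C(G_2)\simeq\mu_p^{r_1+r_2}$; this converts the problem into computing $f(G_1\times G_2;C_1\times C_2)$ in the notation of Lemma~\ref{lem.additive}. The key point you are missing is then a linear-algebra argument over $\FF_p$: in a minimal representation $\rho=\rho_1\oplus\dots\oplus\rho_n$ with irreducible summands, the central characters $\chi_1,\dots,\chi_n$ form a \emph{basis} of $(C_1\times C_2)^*$, and since $\chi_j=(\chi_j\circ\epsilon_1\circ\pi_1)+(\chi_j\circ\epsilon_2\circ\pi_2)$, for each $j$ at least one of these two ``one-sided'' characters lies outside the span of the remaining $\chi_i$. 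One therefore does not allocate or discard mixed summands at all; one \emph{replaces} each $\rho_j$ by the same-dimensional representation $\rho_j\circ\epsilon_a\circ\pi_a$ pulled back from the appropriate factor, preserving both the total dimension and faithfulness on $C_1\times C_2$. The resulting representation visibly splits as $\alpha_1\circ\pi_1\oplus\alpha_2\circ\pi_2$ with $\alpha_i|_{C_i}$ faithful, which gives $f(G_1\times G_2;C_1\times C_2)\ge f(G_1;C_1)+f(G_2;C_2)$ and hence the lower bound. Without this reduction to the central $\mu_p^r$ and the character-replacement trick (or some substitute for it), your argument does not go through.
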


The rest of this paper is structured as follows. 
In Section~\ref{sec:primeToPClosure} we discuss the notion of
$p$-special closure $\pcl{k}$ of a field $k$ and show that
passing from $k$ to $\pcl{k}$ does not change 
the essential $p$-dimension of any $k$-group. 
In Section~\ref{sect.isogeny} we show that if $G \to Q$ is
an isogeny of degree prime to $p$ then the essential $p$-dimensions
of $G$ and $Q$ coincide. Sections~\ref{sect.proof-main-thm},  
\ref{sect.DimIrredReps} and \ref{sect.faithful}
are devoted to the proof of our Main Theorem~\ref{thm.MainTheorem}.
In Sections~\ref{sect.tame} and~\ref{sect.gap} we prove 
Theorem~\ref{thm.tame} and in Section~\ref{sect.additive}  
we prove the Additivity Theorem~\ref{thm.additive}.
In Section \ref{sect.edsmall} we classify central extensions 
$G$ of $p$-groups by tori of small essential $p$-dimension.

\section{The \texorpdfstring{$p$}{p}-special closure of a field}
\label{sec:primeToPClosure}

A field $L$ is called {\em $p$-special} if every finite extension of
$L$ has degree a power of $p$.
By \cite[Proposition 101.16]{EKM} there exists for every field $K$ an
algebraic field extension $L/K$ such that $L$ is $p$-special and every
finite sub-extension of $L/F$ has degree prime to $p$. Such a field
$L$ is called a $p$-special closure of $K$ and will be denoted by
$\pcl{K}$.
The following properties of $p$-special closures will be important for
us in the sequel.

\begin{lem}\label{ptoplem}
Let $K$ be a field with $\Char K \neq p$ and let $\alg{K}$ be an
algebraic closure of $K$ containing $\pcl{K}$.
\begin{enumerate}[label=(\alph*), ref=(\alph*)]
\item\label{ptop1} $\pcl{K}$ is a direct limit of finite
extensions $K_i/K$ of degree prime to $p$.
\item\label{ptop2} The field $\pcl{K}$ is perfect.
\item\label{ptop3} The cohomological $q$-dimension
of $\Psi=\Gal(\alg{K}/\pcl{K})$ is ${\rm cd}_q(\Psi)=0$ for
any prime $q\ne p$.
\end{enumerate}
\end{lem}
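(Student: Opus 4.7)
The plan is to dispatch the three statements in order, with each one feeding the next. For part (a), I would just unpack the definition: by \cite[Proposition 101.16]{EKM}, $\pcl{K}$ is an algebraic extension of $K$ every finite sub-extension of which has degree prime to $p$. Writing $\pcl{K}=\bigcup_i K_i$ as the union of its finite sub-extensions over $K$ exhibits it as the requested directed limit of prime-to-$p$ extensions of $K$. This is really just reading the definition.

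For part (b), the characteristic zero case is vacuous, so I would assume $\Char K = q$ with $q$ a prime different from $p$. For any $a\in\pcl{K}$, the extension $\pcl{K}(a^{1/q})/\pcl{K}$ is purely inseparable of degree $1$ or $q$; since $\pcl{K}$ is $p$-special and $q\neq p$, the degree is forced to be $1$. Hence Frobenius is surjective on $\pcl{K}$ and $\pcl{K}$ is perfect.

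For part (c), I would first use (b) to identify $\Psi$ with the absolute Galois group of $\pcl{K}$: because $\pcl{K}$ is perfect, $\alg{K}$ is also a separable closure of $\pcl{K}$, so the Galois correspondence applies. Every finite continuous quotient of $\Psi$ then corresponds to a finite Galois extension of $\pcl{K}$, whose degree is a power of $p$ by $p$-specialness. Therefore $\Psi$ is pro-$p$, its Sylow pro-$q$ subgroups are trivial for $q\neq p$, and this is exactly the assertion $\op{cd}_q(\Psi)=0$.

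I don't anticipate a genuine obstacle: the whole argument is the translation between the three equivalent ``pro-$p$'' conditions on a field and its absolute Galois group. The one mild subtlety worth flagging is that (c) really does use (b): without perfectness one would first have to replace $\alg{K}$ by a separable closure before invoking the Galois correspondence, which is why the parts must be proved in the stated order.
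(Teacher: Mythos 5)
Your proposal is correct and takes essentially the same route as the paper: (a) is definitional, (b) rules out inseparability because a purely inseparable extension would have degree a power of $\Char K=q\neq p$, contradicting $p$-specialness, and (c) observes that $\Psi$ is a pro-$p$ group and invokes the standard fact from Serre that $\mathrm{cd}_q$ is computed on a Sylow pro-$q$ subgroup, which is trivial here.
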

\begin{proof}
\ref{ptop1}The finite sub-extensions $K'/K$ of $\pcl{K}/K$ form a
direct system with limit $\pcl{K}$. Moreover the
degrees $[K':K]$ are all prime to $p$.
\ref{ptop2} Every finite extension of $\pcl{K}$ has $p$-power degree.
Since $\Char K\neq p$ it is separable.
\ref{ptop3} By construction $\Psi$ is a profinite $p$-group. The
result follows from \cite[Cor. 2, I. 3]{serre-gc}.
\end{proof}
We call a covariant functor $\mathcal{F} \colon
\Fields/k\to \Sets$ {\em limit-preserving}
if for any directed system of fields $\{K_i\}$,
$\displaystyle{\mathcal{F}(\lim_{\rightarrow}K_i)=
\lim_{\rightarrow}\mathcal{F}(K_i)}$.
For example if $G$ is an algebraic group, the functor $F_G = H^1(*,G)$
is limit-preserving; see \cite[2.1]{Ma}.

\begin{lem}
\label{lem.edPrimeToPClosure}
Let $\mathcal{F}$ be limit-preserving and $\alpha \in \mathcal{F}(K)$ an object.
Denote the image of $\alpha$ in $\mathcal{F}(\pcl{K})$ by $\alpha_{\pcl{K}}$.
\begin{enumerate}[label=(\alph*), ref=(\alph*)]
\item\label{edptop1}
$\ed_{\mathcal{F}}(\alpha;p)=\ed_{\mathcal{F}}(\alpha_{\pcl{K}};p)=
\ed_{\mathcal{F}}(\alpha_{\pcl{K}})$.
\item\label{edptop2}
$\ed(\mathcal{F};p)=\ed(\mathcal{F}_{\pcl{k}};p)$, where
$\mathcal{F}_{\pcl{k}} \colon \Fields/\pcl{k} \to \Sets$ denotes the
restriction of $\mathcal{F}$ to $\Fields/\pcl{k}$.
\end{enumerate}
\end{lem}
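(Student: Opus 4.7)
The plan is to establish (a) first by handling the two equalities separately, and then to deduce (b) by a transcendence-degree argument.

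The right-hand equality of (a), $\ed_\mathcal{F}(\alpha_{\pcl{K}};p)=\ed_\mathcal{F}(\alpha_{\pcl{K}})$, is immediate from Lemma~\ref{ptoplem}: every nontrivial finite extension of the $p$-special field $\pcl{K}$ has $p$-power degree, so the only prime-to-$p$ extension in the definition of $\ed_\mathcal{F}(\alpha_{\pcl{K}};p)$ is $\pcl{K}$ itself. For the inequality $\ed_\mathcal{F}(\alpha_{\pcl{K}}) \le \ed_\mathcal{F}(\alpha;p)$, I would fix a finite prime-to-$p$ extension $K'/K$ realizing the minimum in $\ed_\mathcal{F}(\alpha;p)$; after replacing $\pcl{K}$ by a $p$-special closure of $K'$ (still a $p$-special closure of $K$, by the tower property of prime-to-$p$ degrees), monotonicity of $\ed$ under $K' \hookrightarrow \pcl{K}$ yields the claim.

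For the reverse inequality, the key step uses the limit-preserving property of $\mathcal{F}$. Fix a descent $\beta \in \mathcal{F}(K_0)$ of $\alpha_{\pcl{K}}$ with $k \subseteq K_0 \subseteq \pcl{K}$ and $\trdeg_k K_0 = n \colonequals \ed_\mathcal{F}(\alpha_{\pcl{K}})$, using limit-preservation to assume $K_0$ is finitely generated over $k$. Then $L \colonequals K \cdot K_0 \subseteq \pcl{K}$ is finite over $K$, and the images of $\alpha$ and $\beta$ in $\mathcal{F}(L)$, though possibly distinct, become equal in $\mathcal{F}(\pcl{K})$. Since $\pcl{K}/L$ is again a directed limit of finite prime-to-$p$ extensions (Lemma~\ref{ptoplem}), limit-preservation produces a finite prime-to-$p$ extension $L \subseteq K' \subseteq \pcl{K}$ with $\alpha_{K'}=\beta_{K'}$ in $\mathcal{F}(K')$. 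Then $K'/K$ is itself finite of prime-to-$p$ degree, $\alpha_{K'}$ descends to $K_0$, and hence $\ed_\mathcal{F}(\alpha;p) \le \ed_\mathcal{F}(\alpha_{K'}) \le n$.

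For (b), I would arrange $\pcl{k} \subseteq \pcl{K}$ by choosing $\pcl{K}$ as a $p$-special closure of $K \cdot \pcl{k}$. For any $\alpha \in \mathcal{F}(K)$, part (a) gives $\ed_\mathcal{F}(\alpha;p)=\ed_\mathcal{F}(\alpha_{\pcl{K}})$, and this value is the same whether computed over $k$ or over $\pcl{k}$: any $k$-descent subfield $K_0 \subseteq \pcl{K}$ can be enlarged to $K_0 \cdot \pcl{k}$ without changing its transcendence degree over $k$ (since $\pcl{k}/k$ is algebraic), while every $\pcl{k}$-descent is automatically a $k$-descent. Taking suprema over $\alpha$ and $K$ then delivers $\ed(\mathcal{F};p)=\ed(\mathcal{F}_{\pcl{k}};p)$.

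The main technical obstacle is the limit-preservation step above: the equation $\alpha_{\pcl{K}}=\beta_{\pcl{K}}$, which holds a priori only in the colimiting value $\mathcal{F}(\pcl{K})$, must be realized over a finite prime-to-$p$ extension of $K$ without destroying the bounded-transcendence-degree descent of $\beta$. Everything else reduces to formal manipulation of the definitions.
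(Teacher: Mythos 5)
Your proposal is correct and follows essentially the same route as the paper: the easy direction comes from $\pcl{K}$ having no nontrivial prime-to-$p$ extensions, and the key inequality $\ed_{\mathcal{F}}(\alpha;p)\le\ed_{\mathcal{F}}(\alpha_{\pcl{K}})$ is obtained by using limit-preservation to push the descent field $K_0$ and the identification $\alpha=\beta$ down to a finite prime-to-$p$ stage over $K$. The only (immaterial) differences are organizational — you shrink $K_0$ to a finitely generated subfield and form the compositum $K\cdot K_0$, where the paper intersects the directed system $\mathcal{L}$ with $K_0$, and your part (b) spells out the $k$-versus-$\pcl{k}$ comparison that the paper leaves implicit.
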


\begin{proof}
\ref{edptop1}
The inequalities
$\ed_{\mathcal{F}}(\alpha;p)\ge\ed_{\mathcal{F}}(\alpha_{\pcl{K}};p)=\ed_{\mathcal{F}}(\alpha_{\pcl{K}})$
are clear from the definition and since $\pcl{K}$ has no finite
extensions of degree prime to $p$.
It remains to prove
$\ed_{\mathcal{F}}(\alpha;p)\le\ed_{\mathcal{F}}(\alpha_{\pcl{K}})$.
If $L/K$ is finite of degree prime to $p$,
\begin{equation}\label{eq.ptp}
\ed_{\mathcal{F}}(\alpha;p)=\ed_{\mathcal{F}}(\alpha_L;p),
\end{equation}
cf. \cite[Proposition 1.5]{Me1} and its proof.
For the $p$-special closure $\pcl{K}$ this
is similar and uses \eqref{eq.ptp} repeatedly:

Suppose there is a subfield $K_0\subset \pcl{K}$ and
$\alpha_{\pcl{K}}$ comes from an element $\beta\in
\mathcal{F}(K_0)$, so that $\beta_{\pcl{K}}=\alpha_{\pcl{K}}$. Write
$\pcl{K}=\lim\mathcal{L}$, where $\mathcal{L}$ is a direct system of
finite prime to $p$ extensions of $K$. Then $K_0=\lim\mathcal{L}_0$
with $\mathcal{L}_0=\{L\cap K_0\mid L\in\mathcal{L}\}$ and by assumption
on $\mathcal{F}$,
$\displaystyle{\mathcal{F}(K_0)=\lim_{L^\prime\in\mathcal{L}_0}\mathcal{F}(L^\prime)}$.
Thus there is a field $L^\prime=L\cap K_0$ ($L\in \mathcal{L}$) and
$\gamma\in\mathcal{F}(L^\prime)$ such that $\gamma_{K_0}=\beta$.
Since $\alpha_L$ and $\gamma_L$ become equal over $\pcl{K}$, after
possibly passing to a finite extension, we may assume they are equal
over $L$ which is finite of degree prime to $p$ over $K$. Combining
these constructions with \eqref{eq.ptp} we see that
\[
\ed_{\mathcal{F}}(\alpha;p)=\ed_{\mathcal{F}}(\alpha_L;p)=\ed_{\mathcal{F}}(\gamma_L;p)\le\ed_{\mathcal{F}}(\gamma_L)\le\ed_{\mathcal{F}}(\alpha_{\pcl{K}}).
\]

\ref{edptop2} This follows immediately from \ref{edptop1}, taking
$\alpha$ of maximal essential $p$-dimension.
\end{proof}

\begin{prop}
\label{prop:bijectionPrimeToPClosure} Let
$\mathcal{F},\mathcal{G}\colon \Fields/k \to \Sets$ be
limit-preserving functors and $\mathcal{F} \to \mathcal{G}$ a
natural transformation. If the map
\[\mathcal{F}(K) \to \mathcal{G}(K)\]
is bijective (resp.~surjective) for any $p$-special field containing $k$
then
\[\ed(\mathcal{F};p)= \ed(\mathcal{G};p) \quad (\mbox{resp.
}\ed(\mathcal{F};p)\geq\ed(\mathcal{G};p)).\]
\end{prop}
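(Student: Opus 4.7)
The key tool is Lemma~\ref{lem.edPrimeToPClosure}\ref{edptop1}, which expresses the essential $p$-dimension of any object as the essential dimension of its restriction to a $p$-special closure, together with the limit-preserving hypothesis on $\mathcal{F}$ and $\mathcal{G}$. The plan is to reduce each inequality to a computation carried out entirely over $p$-special fields, where the assumed surjectivity or bijectivity applies.

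For the inequality $\ed(\mathcal{G};p) \le \ed(\mathcal{F};p)$ under the surjectivity assumption, I would start with an arbitrary $\beta \in \mathcal{G}(K)$ and pass to $\pcl{K}$. Surjectivity of $\mathcal{F}(\pcl{K}) \to \mathcal{G}(\pcl{K})$ provides a lift $\alpha \in \mathcal{F}(\pcl{K})$ of $\beta_{\pcl{K}}$. If $\alpha$ descends to $\alpha_0 \in \mathcal{F}(K_0)$ with $\trdeg_k K_0 = \ed(\alpha)$, then by naturality the image of $\alpha_0$ in $\mathcal{G}(K_0)$ is a descent of $\beta_{\pcl{K}}$, and hence
\[
\ed(\beta;p) = \ed(\beta_{\pcl{K}}) \le \ed(\alpha) = \ed(\alpha;p) \le \ed(\mathcal{F};p),
\]
the middle equality holding because $\pcl{K}$ admits no finite prime-to-$p$ extensions. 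Taking the supremum over $\beta$ finishes this half.

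For the reverse inequality in the bijective case, given $\alpha \in \mathcal{F}(K)$ with image $\beta \in \mathcal{G}(K)$, I would let $\beta_{\pcl{K}}$ descend to some $\beta_0 \in \mathcal{G}(K_0)$ with $\trdeg_k K_0 = \ed(\beta_{\pcl{K}})$. After realizing a $p$-special closure $\pcl{K_0}$ of $K_0$ inside $\pcl{K}$, bijectivity provides a unique lift of $(\beta_0)_{\pcl{K_0}}$ to $\mathcal{F}(\pcl{K_0})$. Expressing $\pcl{K_0}$ as the directed union of its finite prime-to-$p$ sub-extensions $K_0'/K_0$ (via Lemma~\ref{ptoplem}\ref{ptop1}) and invoking the limit-preserving property for $\mathcal{F}$ produces some $K_0'$ and $\tilde\alpha_0 \in \mathcal{F}(K_0')$ lifting this element. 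Now $(\tilde\alpha_0)_{\pcl{K}}$ and $\alpha_{\pcl{K}}$ both map to $\beta_{\pcl{K}}$ in $\mathcal{G}(\pcl{K})$, so by injectivity they coincide; a further use of the limit-preserving property for $\mathcal{F}$ yields a finite prime-to-$p$ extension $L/K$ with $K_0' \subset L \subset \pcl{K}$ satisfying $(\tilde\alpha_0)_L = \alpha_L$. Therefore
\[
\ed(\alpha;p) = \ed(\alpha_L;p) \le \ed(\alpha_L) \le \trdeg_k K_0' = \trdeg_k K_0 = \ed(\beta;p) \le \ed(\mathcal{G};p),
\]
and taking the supremum over $\alpha$ concludes.

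The main obstacle is arranging for a $p$-special closure $\pcl{K_0}$ to sit inside $\pcl{K}$ (which can be achieved, for example, by taking the compositum in $\pcl{K}$ of all finite prime-to-$p$ extensions of $K_0$ available there, or by a Zorn's lemma argument) and then carefully threading the nested direct limits of prime-to-$p$ extensions so that the compatibilities at the $K$-level and the $K_0$-level line up. Each individual manipulation is modeled directly on the bookkeeping already present in the proof of Lemma~\ref{lem.edPrimeToPClosure}, so no new technology is required.
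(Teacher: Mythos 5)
Your argument is correct and follows essentially the same route as the paper: reduce to $p$-special fields via Lemma~\ref{lem.edPrimeToPClosure}, realize a $p$-special closure of the field of definition of $\beta$ inside $\pcl{K}$, and use bijectivity over $p$-special fields together with the limit-preserving hypothesis to descend $\alpha$. The only notable differences are that you reprove the surjective half directly where the paper just observes the transformation is $p$-surjective and cites Merkurjev's Proposition~1.5, and that your final descent to a finite level $L$ is harmless but unnecessary (once $\alpha_{\pcl{K}}=(\tilde\alpha_0)_{\pcl{K}}$ descends to $K_0'$, Lemma~\ref{lem.edPrimeToPClosure} already gives $\ed(\alpha;p)=\ed(\alpha_{\pcl{K}})\le\trdeg_k K_0'$).
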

\begin{proof}

Assume the maps are surjective. By Lemma~\ref{ptoplem}\ref{ptop1},
the natural transformation is $p$-surjective, in the terminology of
\cite{Me1}, so we can apply \cite[Prop.\ 1.5]{Me1} to conclude
$\ed(\mathcal{F};p)\geq\ed(\mathcal{G};p)$.

Now assume the maps are bijective. Let $\alpha$ be in
$\mathcal{F}(K)$ for some $K/k$ and $\beta$ its image in
$\mathcal{G}(K)$. We claim that
$\ed(\alpha; p)=\ed(\beta; p)$.
First, by Lemma~\ref{lem.edPrimeToPClosure} we can assume that $K$
is $p$-special and it is enough to prove that
$\ed(\alpha)=\ed(\beta)$.

Assume that $\beta$ comes from $\beta_0\in \mathcal{G}(K_0)$ for
some field $K_0\subset K$.
Any finite prime to $p$
extension of $K_0$ is isomorphic to a subfield
of $K$ (cf.~\cite[Lemma 6.1]{Me1})
and so also any $p$-special closure of $K_0$ (which has the
same transcendence degree over $k$).
We may therefore assume that $K_0$ is $p$-special.
By assumption $\mathcal{F}(K_0)\rightarrow
\mathcal{G}(K_0)$ and $\mathcal{F}(K)\rightarrow \mathcal{G}(K)$ are
bijective. The unique element $\alpha_0\in\mathcal{F}(K_0)$ which
maps to $\beta_0$ must therefore map to $\alpha$ under the natural
restriction map. This shows that $\ed(\alpha)\le\ed(\beta)$. The
other inequality always holds and the claim follows.

Taking $\alpha$ of maximal essential dimension, we obtain
$\ed(\mathcal{F};p)=\ed(\alpha;p)=\ed(\beta;p)\le\ed(\mathcal{G};p)$.
\end{proof}

\section{\texorpdfstring{$p$}{p}-isogenies}
\label{sect.isogeny}

An isogeny of algebraic groups is a surjective morphism $G\to Q$
with finite kernel. If the kernel is of order prime to
$p$ we say that the isogeny is a $p$-isogeny.
In this section we will prove
Proposition~\ref{prop.edQuotient} which says that $p$-isogenous
groups have the same essential $p$-dimension. 

\begin{prop}\label{prop.edQuotient}
Suppose $G \to Q$ is a $p$-isogeny of algebraic groups over a field $k$ of characteristic $\neq p$. Then
\begin{enumerate}[label=(\alph*), ref=(\alph*)]
\item\label{edquo1} For any $p$-special field $K$
containing $k$ the natural map
$H^1(K,G) \to H^1(K,Q)$ is bijective.
\item\label{edquo2} $\ed(G;p) = \ed(Q;p)$.
\end{enumerate}
\end{prop}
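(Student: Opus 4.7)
The plan is to prove (a) first; (b) then follows immediately from (a) by applying Proposition~\ref{prop:bijectionPrimeToPClosure} to the natural transformation $H^1(-,G) \to H^1(-,Q)$ of limit-preserving functors on $\Fields/k$.

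For (a), set $N \colonequals \ker(G \to Q)$, a finite $K$-group scheme of order prime to $p$. The strategy is to reduce to the case of abelian $N$ by dévissage. If $N$ is non-abelian, let $A$ be the last nontrivial term of the derived series of $N$: then $A$ is abelian and characteristic in $N$, hence normal in $G$, and $|A|$ is a proper prime-to-$p$ divisor of $|N|$. Both the quotient map $G \to G/A$ (with kernel $A$) and the induced map $G/A \to Q$ (with kernel $N/A$) are then $p$-isogenies with strictly smaller kernels, so induction on $|N|$ reduces (a) to the case where $N$ is abelian.

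For $N$ abelian, the plan is to combine the flat long exact sequence
\[
H^1(K, N) \to H^1(K, G) \to H^1(K, Q) \xrightarrow{\delta} H^2(K, N)
\]
with the standard nonabelian twisting trick: given two classes $\xi_1, \xi_2 \in H^1(K, G)$ with the same image in $H^1(K, Q)$, twist $G$ by a cocycle representing $\xi_1$ to reduce to comparing $\xi_2$ with the trivial class. This reduces the whole proof to the single vanishing statement: $H^i(K, N') = 0$ for $i = 1, 2$ and every $K$-form $N'$ of $N$. Since $K$ is perfect by Lemma~\ref{ptoplem}\ref{ptop2}, any such $N'$ decomposes as $N'^0 \times N'^{\mathrm{et}}$. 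The étale summand is dispatched by Lemma~\ref{ptoplem}\ref{ptop3}: the absolute Galois group is pro-$p$ and the coefficients have prime-to-$p$ order, so all positive cohomology vanishes. The connected infinitesimal summand $N'^0$ is nonzero only when $\ell \colonequals \Char K > 0$ divides $|N|$ (and automatically $\ell \neq p$); one further reduces over the perfect field $K$, via composition series, to the elementary pieces $\mu_\ell$ and $\alpha_\ell$. The Kummer and Artin--Schreier sequences together with perfectness ($K^* = (K^*)^\ell$ and $K = K^\ell$) yield $H^1(K, \mu_\ell) = H^1(K, \alpha_\ell) = 0$, and $H^2(K, \mu_\ell) = \Br(K)[\ell] = 0$ because any $\ell$-torsion Brauer class would be split by an $\ell$-power extension, impossible in a $p$-special field when $\ell \neq p$.

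The main obstacle is the handling of the infinitesimal piece $N'^0$: it forces the use of flat rather than Galois cohomology and essential use of the perfectness of $K$, whereas the étale part is purely formal. The remaining ingredients---the dévissage on $|N|$, the long exact sequence, the twisting argument, and the deduction of (b) from (a)---are routine.
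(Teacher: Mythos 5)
Your deduction of (b) from (a) via Proposition~\ref{prop:bijectionPrimeToPClosure} matches the paper, and your treatment of the abelian case is sound in outline (with the usual caveat that for a normal but non-central abelian kernel the coboundary lands in $H^2$ of a twisted form of $N$, which your twisting remark covers). The genuine gap is the d\'evissage: you assert that the last nontrivial term $A$ of the derived series of $N$ is abelian and a \emph{proper} subgroup of $N$, but this fails whenever $N$ is not solvable. A finite group of order prime to $p$ need not be solvable for odd $p$: the paper's definition of isogeny is just a surjection with finite kernel, so for instance $G = Q \times A_5 \to Q$ is a legitimate $p$-isogeny for $p \ge 7$. For such $N$ the derived series stabilizes at a nontrivial perfect group, your induction on $|N|$ makes no progress, and the long exact sequence ending in $H^2(K,N)$ --- which genuinely requires abelian coefficients --- is unavailable. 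Feit--Thompson rescues the reduction only when $p=2$.

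The paper never reduces to the abelian case. Since $K$ is $p$-special of characteristic $\neq p$ it is perfect, so $\sep{K}=\alg{K}$ and $1 \to N(\sep{K}) \to G(\sep{K}) \to Q(\sep{K}) \to 1$ is an exact sequence of $\Gal(\sep{K}/K)$-groups; injectivity of $H^1(K,G)\to H^1(K,Q)$ follows from the vanishing of $H^1(K,{}_cN)$ for every twist (a pro-$p$ group acting on a finite group of order prime to $p$), and surjectivity from Serre's nonabelian criterion [Se, I, Prop.~46] together with $\mathrm{cd}_q(\Gal(\sep{K}/K))=0$ for every $q$ dividing $|N(\sep{K})|$. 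Working with $\sep{K}$-points also sidesteps your flat-cohomology analysis of the infinitesimal part ($\mu_\ell$, $\alpha_\ell$), which is correct in outline but is extra machinery the Galois-cohomological formulation does not need. To repair your argument, replace the derived-series reduction by Serre's nonabelian machinery (or restrict the d\'evissage to solvable kernels and invoke Prop.~46 for the rest).
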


\begin{exa}
Let $E_6^{sc}, E_7^{sc}$ be simply connected simple groups
of type $E_6, E_7$ respectively.
In \cite[9.4, 9.6]{GR} it is shown that if
$k$ is an algebraically closed field of characteristic $\ne 2$ and
$3$ respectively, then
\[ \text{$\ed(E_6^{sc}; 2)=3$ and
$\ed(E_7^{sc}; 3)=3$.} \]
For the adjoint groups
$E_6^{ad}=E_6^{sc}/\mu_3$, $E_7^{ad}=E_7^{sc}/\mu_2$ we therefore have
\[ \text{$\ed(E_6^{ad}; 2)=3$ and
$\ed(E_7^{ad}; 3)=3$.} \]
\end{exa}

For the proof of Proposition \ref{prop.edQuotient} will need two lemmas.

\begin{lem}\label{order}
\label{lem.equivalence} Let $N$ be a finite algebraic group over a
field $k$ of characteristic $\neq p$. The following are equivalent:
\begin{enumerate}[label=(\alph*), ref=(\alph*)]
\item\label{order1} $p$ does not divide the order of $N$.
\item\label{order2} $p$ does not divide the order of $N(\alg{k})$.
\end{enumerate}
If $N$ is also assumed to be abelian, denote by $N[p]$ the
$p$-torsion subgroup of $N$. The following are equivalent to the
above conditions.
\begin{enumerate}[label=(\alph*$'$), ref=(\alph*$'$)]
\item\label{order3} $N[p](\alg{k})=\{1\}$.
\item\label{order4} $N[p](\pcl{k})=\{1\}$.
\end{enumerate}
\end{lem}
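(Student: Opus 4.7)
The plan is to exploit the connected--\'etale decomposition of the finite group scheme $N$ to reduce the first equivalence to counting $\alg{k}$-points, and then use the \'etaleness of $N[p]$ together with the pro-$p$ structure of $\Gal(\alg{k}/\pcl{k})$ to handle the abelian case.

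First I would invoke the short exact sequence $1 \to N^0 \to N \to N/N^0 \to 1$, where $N^0$ is the (infinitesimal) connected component of the identity and $N/N^0$ is the \'etale quotient. Orders multiply in this sequence, so $|N|=|N^0|\cdot|N/N^0|$. Since $\Char k\ne p$, the order $|N^0|$ is coprime to $p$: if $\Char k=0$ then $N^0$ is smooth by Cartier's theorem and, being connected and finite, is trivial; if $\Char k=\ell>0$ with $\ell\ne p$, then $|N^0|$ is a power of $\ell$ by a standard result on infinitesimal group schemes. Moreover $N^0(\alg{k})=\{1\}$, since the underlying reduced scheme of $N^0$ is the identity point. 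Hence $N(\alg{k})=(N/N^0)(\alg{k})$ has order $|N/N^0|$, and (a)$\Leftrightarrow$(b) follows.

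For the abelian case, (a)$\Leftrightarrow$(a$'$) is Cauchy's theorem applied to the finite abelian group $N(\alg{k})$: its order is coprime to $p$ if and only if it contains no nontrivial element killed by $p$, i.e.\ $N[p](\alg{k})=\{1\}$. The implication (a$'$)$\Rightarrow$(b$'$) is immediate from $\pcl{k}\subset\alg{k}$. For the converse, I would first show that $N[p]$ is \'etale over $k$: applying $\Lie$ to $0\to N[p]\to N\xrightarrow{[p]}N$ identifies $\Lie N[p]$ with the kernel of $p\cdot\Id$ on $\Lie N$, which vanishes since $p$ is invertible in $k$. Thus $N[p](\alg{k})=N[p](\sep{k})$ is a finite $p$-group carrying a continuous action of $\Psi=\Gal(\sep{k}/\pcl{k})$, which is a pro-$p$ group by the proof of Lemma~\ref{ptoplem}\ref{ptop3}. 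Since a nontrivial finite $p$-group acted upon by a pro-$p$ group always has a nontrivial fixed point, $N[p](\pcl{k})=N[p](\alg{k})^\Psi=\{1\}$ forces $N[p](\alg{k})=\{1\}$.

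The step requiring the most care is the claim that $|N^0|$ is coprime to $p$ when $\Char k\ne p$, as it tacitly uses a structural fact about infinitesimal group schemes in positive characteristic; the remainder is essentially formal once the connected--\'etale sequence and the pro-$p$ fixed-point principle are in hand.
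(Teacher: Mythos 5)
Your proof is correct and follows essentially the same route as the paper: the connected--\'etale sequence $1\to N^0\to N\to N^{et}\to 1$ with $|N^0|$ prime to $p$ and $N^0(\alg{k})=\{1\}$ for the equivalence of (a) and (b), and the orbit-counting/fixed-point argument for the pro-$p$ group $\Gal(\alg{k}/\pcl{k})$ acting on the finite $p$-group $N[p](\alg{k})$ for (b$'$)$\Rightarrow$(a$'$). Your extra verifications (Cartier's theorem in characteristic $0$, \'etaleness of $N[p]$ via the Lie algebra so that $N[p](\alg{k})^{\Psi}=N[p](\pcl{k})$) are points the paper leaves implicit, and they are sound.
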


\begin{proof}
\ref{order1}$\iff$\ref{order2}:\; Let $N^0$ be the connected
component of $N$ and $N^{et}=N/N^0$ the \'etale quotient. Recall
that the order of a finite algebraic group $N$ over $k$ is defined as
$|N|=\dim_k k[N]$ and $|N|=|N^0||N^{et}|$, see, for example,~\cite{Ta}. 
If $\Char k=0$, $N^0$ is trivial, if $\Char k=q\ne p$
is positive, $|N^0|$ is a power of $q$. Hence $N$ is of order
prime to $p$ if and only if the \'etale algebraic group $N^{et}$ is.
Since $N^0$ is connected and finite, $N^0(\alg{k})=\{1\}$ and
so $N(\alg{k})$ is of order prime to $p$ if and
only if the group $N^{et}(\alg{k})$ is. Then $|N^{et}|=\dim_k
k[N^{et}]=|N^{et}(\alg{k})|$, cf. \cite[V.29 Corollary]{Bou}.

\ref{order2}$\iff$\ref{order3} $\Rightarrow$ \ref{order4} are clear.

\ref{order3} $\Leftarrow$ \ref{order4}: Suppose $N[p](\alg{k})$ is
nontrivial. The Galois group $\Gamma=\Gal(\alg{k}/\pcl{k})$ is a
pro-$p$ group and acts on the $p$-group $N[p](\alg{k})$. The image
of $\Gamma$ in $\Aut(N[p](\alg{k}))$ is again a (finite) $p$-group
and the size of every $\Gamma$-orbit in $N[p](\alg{k})$ is a power
of $p$. Since $\Gamma$ fixes the identity in $N[p](\alg{k})$, this
is only possible if it also fixes at least $p-1$ more elements. It
follows that $N[p](\pcl{k})$ is non-trivial.
\end{proof}

\begin{rem}
Part \ref{order4} could be replaced by the slightly stronger
statement that $N[p](\pcl{k}\cap\sep{k})=\{1\}$, but we will not
need this in the sequel.
\end{rem}

%

\begin{proof}[{Proof of Proposition \ref{prop.edQuotient}}]
\ref{edquo1} Let $N$ be the kernel of $G\to Q$ and $K$ be a
$p$-special field over $k$.
Since $\sep{K}=\alg{K}$ (see Lemma~\ref{ptoplem}(b)), the sequence of
$\sep{K}$-points
$1\to N(\sep{K})\to G(\sep{K})\to Q(\sep{K})\to 1$ is exact.
By Lemma~\ref{order}, the order of $N(\sep{K})$ is not divisible by
$p$ and therefore coprime to the order of any finite quotient of
$\Psi=\Gal(\sep{K}/K)$.
By~\cite[I.5, ex. 2]{serre-gc} this implies that
 $H^1(K,N)=\{1\}$.
Similarly, if $_cN$ is the group $N$ twisted by a cocycle $c:\Psi\to
G$, $_cN(\sep{K})=N(\sep{K})$ is of order prime to $p$ and
$H^1(K,\,_cN)=\{1\}$.
It follows that $H^1(K,G)\to H^1(K,Q)$ is injective, cf. \cite[I.5.5]{serre-gc}.

Surjectivity is a consequence of \cite[I. Proposition 46]{serre-gc}
and the fact that the $q$-cohomological dimension of $\Psi$
is $0$ for any divisor $q$ of $|N(\sep{K})|$ (Lemma \ref{ptoplem}(c)).

This concludes the proof of part~\ref{edquo1}. Part~\ref{edquo2}
follows from part (a) and Proposition~\ref{prop:bijectionPrimeToPClosure}.
\end{proof}

\section{Proof of the Main Theorem: an overview}
\label{sect.proof-main-thm}

We will assume throughout this section that the field $k$ is $p$-special of $\Char k \ne p$, and $G$ is a smooth affine
$k$-group, such that $G^0 = T$ is a torus and $G/G^0 = F$ is a finite $p$-group,
as in~\eqref{e.basic-sequence}.  

The upper bound in Theorem~\ref{thm.MainTheorem} is an easy consequence
of Proposition~\ref{prop.edQuotient}. Indeed, 
suppose $\mu \colon G \to \GL(V)$ is a $p$-generically free representation.
That is, $\Ker \mu$ is a finite group of order prime to $p$ and
$\mu$ descends to a generically free representation of
$G' \colonequals  G/\Ker \mu$. By Proposition~\ref{prop.edQuotient} 
$\ed(G;p)=\ed(G';p)$. On the other hand, 
\[ \ed(G'; p) \le \ed(G') \le \dim \mu - \dim G' = \dim \mu - \dim G ; \] 
see~\cite[Lemma 4.11]{BF} or~\cite[Corollary 4.2]{Me1}. 
This completes the proof of the upper bound in Theorem~\ref{thm.MainTheorem}.

The rest of this section will be devoted to outlining a proof 
of the lower bound of Theorem~\ref{thm.MainTheorem}. The details
(namely, the proofs of Propositions~\ref{prop3.2} and~\ref{prop.main2})
will be supplied in the next two sections.  The starting point of our argument 
is~\cite[Theorem 3.1]{LMMR}, which we reproduce 
as Theorem~\ref{thm:lowerbound} below for the reader's convenience.

\begin{thm} \label{thm:lowerbound}
Consider an exact sequence of algebraic groups over $k$
\[ 1 \to C \to G \to Q \to 1 \]
such that $C$ is central in $G$ and is isomorphic 
to $\mu_p^r$ for some $r \ge 0$.
Given a character $\chi \colon C \to \mu_p$ denote by $\Rep^{\chi}$ the class 
of irreducible representations $\phi \colon G \to \GL(V)$, 
such that $\phi(c) = \chi(c) \Id$ for every $c \in C$.

Assume further that 
\begin{equation} \label{e3.1} \gcd \{ \dim \phi \, | \, 
\phi \in \Rep^{\chi} \}
= \min \{ \dim \phi \, | \, \phi \in \Rep^{\chi} \}
\end{equation} 
for every character $\chi \colon C \to \mu_p$.  Then
\[ \ed(G;p) \geq \min \dim \psi - \dim G \, , \]
where the minimum is taken over all finite-dimensional 
representations $\psi$ of $G$
such that $\psi_{| \, C}$ is faithful.
\end{thm}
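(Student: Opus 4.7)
The plan is to adapt the Karpenko--Merkurjev strategy: use the connecting map associated to the central extension $1 \to C \to G \to Q \to 1$ to translate a lower bound for $\ed(G;p)$ into a Brauer-index statement for a class in $H^2(K, C)$, which is in turn controlled representation-theoretically via hypothesis~\eqref{e3.1}. Concretely, I would first choose a generically free linear representation $G \hookrightarrow \GL(W)$ and form the resulting versal $G$-torsor $E \to \Spec K$ with $K = k(W)^G$, so that $\ed(G;p) = \ed(E;p)$ by versality; then let $\bar E \in H^1(K, Q)$ be its image and $\alpha \colonequals \partial(\bar E) \in H^2(K, C) \cong \Br(K)[p]^{\oplus r}$ the associated cohomology class.

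Next, I would translate representation theory into Brauer indices. For each $\chi \in C^* \colonequals \Hom(C, \mu_p)$ and each irreducible $\phi \in \Rep^\chi$, the composite $G \xrightarrow{\phi} \GL(V_\phi) \twoheadrightarrow \PGL(V_\phi)$ factors through $Q$ (since $C$ acts by scalars), and pulling back $\bar E$ along the resulting map $Q \to \PGL(V_\phi)$ produces a central simple $K$-algebra of degree $\dim \phi$ with Brauer class $\chi_*(\alpha)$. Hence $\ind \chi_*(\alpha) \mid \dim \phi$ for every $\phi \in \Rep^\chi$, and by hypothesis~\eqref{e3.1} also $\ind \chi_*(\alpha) \mid d_\chi \colonequals \min\{\dim \phi : \phi \in \Rep^\chi\}$. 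A character-theoretic decomposition argument then identifies
\[
\min_{\psi|_C \text{ faithful}} \dim \psi \;=\; \min_{\{\chi_1, \ldots, \chi_r\}} \sum_{i=1}^r d_{\chi_i},
\]
the outer minimum being over $\FF_p$-bases of $C^*$: any $\psi$ with $\psi|_C$ faithful decomposes into irreducibles whose central characters generate $C^*$, and pruning to a basis and replacing each summand by a minimal-dimensional representative of $\Rep^{\chi_i}$ yields the stated equality.

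To conclude, I would invoke the Karpenko--Merkurjev theorem on essential $p$-dimension of $\mu_p^r$-gerbes, applied to $\alpha$, together with a specialization argument forcing the versal indices $\ind \chi_*(\alpha)$ to equal $d_\chi$ (rather than some smaller divisor). This yields
\[
\ed(G;p) + \dim G \;\ge\; \min_{\{\chi_1, \ldots, \chi_r\}} \sum_{i=1}^r \ind \chi_{i*}(\alpha) \;=\; \min \dim \psi,
\]
whence $\ed(G;p) \ge \min \dim \psi - \dim G$, as required.

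The main obstacle is this concluding step, which involves two delicate points: verifying that versality of $E$ forces the divisibility of the second paragraph to be an equality --- this is where hypothesis~\eqref{e3.1} enters in an essential way --- and correctly tracking the additive offset $\dim G$ between $\ed(E;p)$ and the Brauer-index sum attached to $\alpha$. This offset encodes the fiber dimension between the versal $G$-torsor and the $\mu_p^r$-gerbe it classifies; its proper treatment in the presence of a positive-dimensional torus $T$ is what distinguishes the present statement from the original Karpenko--Merkurjev setting, where $G$ is finite and $\dim G = 0$.
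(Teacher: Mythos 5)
First, a point of reference: the paper does not prove this statement at all --- it is quoted verbatim from \cite[Theorem 3.1]{LMMR} ``for the reader's convenience,'' so your proposal has to be measured against the proof in that source (which in turn rests on \cite{KM}). Your outline does reconstruct the right high-level strategy: Brauer classes attached to the central extension, the divisibility $\ind \mid \dim\phi$ for $\phi \in \Rep^\chi$, hypothesis \eqref{e3.1} to pass from a gcd to a minimum, the identification of $\min\dim\psi$ with $\min_{\text{bases}}\sum_i d_{\chi_i}$ (which is essentially the argument of Lemma~\ref{lem.additive}), and the Karpenko--Merkurjev computation of the canonical $p$-dimension of products of Severi--Brauer varieties as the engine.

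However, there is a fatal gap at the very first step. You set $\alpha \colonequals \partial(\bar E)$, where $\bar E \in H^1(K,Q)$ is the image of a versal $G$-torsor $E$. Since $C$ is central, the sequence $H^1(K,C)\to H^1(K,G)\to H^1(K,Q)\xrightarrow{\;\partial\;}H^2(K,C)$ is exact at $H^1(K,Q)$, i.e.\ the image of $H^1(K,G)$ is precisely $\partial^{-1}(0)$; as $\bar E$ lifts to the $G$-torsor $E$ by construction, $\alpha=0$. Equivalently, the algebra you build in your second paragraph is the twist of $\op{End}(V_\phi)$ by (the image of) a $G$-torsor, and since $V_\phi$ is a representation of $G$ itself this twist is the split algebra $\op{End}({}_EV_\phi)$. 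Hence every $\ind\chi_*(\alpha)$ equals $1$, your concluding display collapses to $\ed(G;p)+\dim G\ge r$, and no specialization or versality argument can repair this --- there is nothing to specialize. The actual proof attaches the Brauer classes to a versal $Q$-torsor, which in general does \emph{not} lift to a $G$-torsor (for $G$ the quaternion group and $Q=(\ZZ/2)^2$ the obstruction $\partial(y)$ for generic $y$ is a nonsplit quaternion algebra, of index $2=\gcd\{\dim\phi\}$); equivalently, one works with the nonneutral $C$-gerbe $BG\times_{BQ}\Spec K$ over the function field of a classifying variety of $Q$, and the entire content of the lower bound is the $p$-incompressibility of that gerbe. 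Two lesser inaccuracies: hypothesis \eqref{e3.1} is not what forces $\ind\chi_*(\alpha)$ to equal $\gcd\{\dim\phi:\phi\in\Rep^\chi\}$ for the versal class (that equality is a separate versality/specialization argument); it is only used at the end to replace that gcd by $d_\chi=\min\{\dim\phi\}$. And since $G$ need not be linearly reductive, ``$\psi$ decomposes into irreducibles'' should be replaced by a composition-series argument, restricting to the linearly reductive subgroup $C$ to get the direct sum of characters, exactly as in the proof of Lemma~\ref{lem.additive}.
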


To prove the lower bound of Theorem~\ref{thm.MainTheorem},
we will apply Theorem~\ref{thm:lowerbound} to the exact sequence
\begin{equation} \label{e.soc}
1 \to C(G) \to G \to Q \to 1 \, .
\end{equation}
where $C(G)$ is a central subgroup of $G$ defined as follows.
Recall from \cite[Section 2]{LMMR} that
if $A$ is a $k$-group of multiplicative type, 
$\Split(A)$ is defined as the maximal split $k$-subgroup of $A$. 
That is, if $X(A)$ is the character $\Gal(\sep{k}/k)$-lattice of $A$ then
the character lattice of $\Split(A)$ is
defined as the largest quotient of $X(A)$ 
with trivial $\Gal(\sep{k}/k)$-action. 

If $G$ be an extension of a finite $p$-group by a torus, 
as in~\eqref{e.basic-sequence}, denote by $Z(G)[p]$ the $p$-torsion
subgroup of the center $Z(G)$. 
Note that $Z(G)$ is a commutative group, which is an extension
of a $p$-group by a torus. Since $\Char k \neq p$ it is smooth.
Moreover $Z(G)(\alg{k})$ consists of semi-simple elements. It follows
that $Z(G)$ is of multiplicative type.  We now define 
$C(G)\colonequals \Split(Z(G)[p])$.

In order to show that Theorem~\ref{thm:lowerbound} can be aplied to
the sequence~\eqref{e.soc}, we need to check that condition~\eqref{e3.1}
is satisfied. This is a consequence of the following proposition, which
will be proved in the next section. 

\begin{prop} \label{prop3.2}
Let \[1 \to T \to G \to F \to 1 \] be an exact sequence of (linear) 
algebraic $k$-groups, where $F$ is a finite $p$-group and $T$ a torus.
Then the dimension of every irreducible representation
of $G$ over $k$ is a power of $p$.
\end{prop}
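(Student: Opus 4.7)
The plan has two layers: first descend the statement from $k$ to an algebraic closure $\bar k$, and then prove the $\bar k$-version via Clifford theory applied to the normal torus $T$.

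For the descent step, let $V$ be a simple $k[G]$-module, put $D=\op{End}_{k[G]}(V)$, $L=Z(D)$, and $t=[D:L]^{1/2}$. Wedderburn--Schur-index theory gives
\[ \dim_k V = [L:k]\cdot t\cdot \dim_{\bar k} W \]
for any simple $\bar k[G]$-summand $W$ of $V\otimes_k\bar k$. Since $k$ is $p$-special, every finite extension of $k$ has $p$-power degree; this forces $[L:k]$ to be a $p$-power, and applying the same fact to a maximal subfield of $D$ (of degree $t$ over $L$) shows that $t$ is a $p$-power too. Thus it suffices to prove that $\dim_{\bar k} W$ is a $p$-power for every simple $\bar k[G]$-representation $W$.

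Over $\bar k$ the torus $T$ splits, so Clifford theory applies. Decompose $W=\bigoplus_\chi W_\chi$ into $T$-weight spaces; the quotient $F=G/T$ permutes the weights, and irreducibility of $W$ forces the occurring weights to form a single $F$-orbit. Fix one such weight $\chi$, let $F_\chi\subseteq F$ be its stabilizer and $H=\pi^{-1}(F_\chi)$. Clifford's theorem gives $W=\op{Ind}_H^{G_{\bar k}} W_\chi$, so $\dim W=[F:F_\chi]\cdot\dim W_\chi$; the first factor is a $p$-power because $F$ is, so it is enough to show $\dim W_\chi$ is a $p$-power.

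Since $\Ker\chi\subseteq T$ acts trivially on $W_\chi$, the $H$-action factors through $H':=H/\Ker\chi$. The case $\chi=0$ is immediate, as $H'=F_\chi$ is then a finite $p$-group and its irreducible $\bar k$-representations have $p$-power dimension (since $\Char\bar k\neq p$). Otherwise $\chi$ identifies $T/\Ker\chi$ with $\Gm$, giving a central extension
\[ 1\to\Gm\to H'\to F_\chi\to 1 \]
in which $\Gm$ acts on $W_\chi$ by scalars. Its class in $H^2(F_\chi,\Gm)$ is killed by $|F_\chi|$ and is therefore pulled back from $H^2(F_\chi,\mu_{p^m})$ for some $m$; the corresponding pushout produces a finite central extension $\hat F_\chi$ of $F_\chi$ by $\mu_{p^m}$, a finite $p$-group, embedded in $H'$. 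Because $\Gm$ acts only by scalars, $\hat F_\chi$-invariant and $H'$-invariant subspaces of $W_\chi$ coincide, so $W_\chi$ remains irreducible under $\hat F_\chi$, and hence $\dim W_\chi$ is a $p$-power. The main obstacle I expect is precisely this last reduction: one must arrange the cohomological pushout so that the finite-group replacement preserves irreducibility, which succeeds only because the $\Gm$-part contributes purely scalar actions on $W_\chi$.
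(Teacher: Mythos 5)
Your argument is correct, but it takes a genuinely different route from the paper's. The paper proves Proposition~\ref{prop3.2} by reducing to the already-established case of (possibly twisted) finite $p$-groups over the $p$-special field $k$ itself, quoted from the proof of \cite[Theorem 7.1]{LMMR}: it first produces a finite $p$-subgroup $F'\subset G$ with $\pi(F')=F$ (Lemma~\ref{lem.schneider}, proved over an arbitrary base field via Hochschild cohomology and a restriction--corestriction argument), observes that the finite $p$-groups generated by $F'$ and $T[p^i]$ have Zariski-dense union in $G$, and then shows by a Noetherianity argument on Grassmannians (Lemma~\ref{lem3.3}) that an irreducible $G$-representation remains irreducible on one of these finite subgroups. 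You instead descend to the algebraic closure by Schur-index theory --- $p$-specialness of $k$ forces both $[L:k]$ and the index $t$ to be $p$-powers, as you say --- and then run Clifford theory with respect to the normal split torus, reducing to an irreducible representation of a finite central extension $\hat F_\chi$ of $F_\chi$ by $\mu_{p^m}$. The existence of $\hat F_\chi$ is exactly the split, central, algebraically closed special case of Lemma~\ref{lem.schneider} (the case the paper attributes to \cite{cgr} and \cite{bs}), where it follows from the classical fact that $H^2\bigl(F_\chi,\alg{k}^{\times}\bigr)$ is killed by $|F_\chi|$ together with the Kummer sequence; your observation that $\Gm$ acts by scalars on $W_\chi$, so that $\hat F_\chi$-invariant subspaces are automatically $H'$-invariant, is the correct way to close that step. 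What each approach buys: the paper's route avoids redoing any representation theory by leaning on the twisted finite $p$-group case over $k$, at the cost of the general form of Lemma~\ref{lem.schneider}; yours is self-contained modulo standard Schur-index and Clifford theory and only needs the easy form of that lemma, but in effect re-proves the finite $p$-group case along the way. Do note that $p$-specialness of $k$, which your descent step uses crucially, is a standing hypothesis of the section rather than part of the displayed statement --- it is genuinely needed, since the claim already fails for $G=\ZZ/p\ZZ$ over $\QQ$.
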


Applying Theorem~\ref{thm:lowerbound} to the exact 
sequence~\eqref{e.soc} now yields
\[
\ed(G;p) \ge \min \; \dim \rho - \dim G \, ,
\]
where the minimum is taken over all representations
$\rho \colon G \to \GL(V)$ such that $\rho_{|\, C(G)}$ is faithful.
This resembles the lower bound of Theorem~\ref{thm.MainTheorem};
the only difference is that in the statement of  
Theorem~\ref{thm.MainTheorem} we take the minimum
over $p$-faithful representations $\rho$, and here we ask
that $\rho_{| \, C(G)}$ should be faithful.
The following proposition shows that the two bounds are, 
in fact, the same, thus completing the proof of Theorem~\ref{thm.MainTheorem}.   

\begin{prop} \label{prop.main2}
A finite-dimensional representation $\rho$ of $G$ is $p$-faithful
if and only if $\rho_{| \, C(G)}$ is faithful.
\end{prop}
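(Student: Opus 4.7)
The forward direction is immediate: if $\ker\rho$ is finite of order prime to $p$, then $\ker\rho \cap C(G)$ is a subgroup scheme whose order divides both $|\ker\rho|$ (prime to $p$) and $|C(G)|$ (a power of $p$, as $C(G) \cong \mu_p^r$), and so is trivial. The converse is the substantive direction, which I would prove by contrapositive: set $N := \ker\rho$ and assume $N$ is either positive-dimensional or finite with $p \mid |N|$; the goal is to exhibit a non-trivial subgroup of $N \cap C(G)$. Two auxiliary principles will be applied repeatedly. \emph{Tool 1:} if $H$ is a non-trivial finite $p$-torsion subgroup of $Z(G) \cap N$ of multiplicative type, then $\Split(H) \subseteq C(G) \cap N$ is non-trivial, because the character lattice of $H$ is a non-trivial finite $p$-group acted on by the pro-$p$ group $\Gal(\sep{k}/k)$ (using $k=\pcl{k}$), and its coinvariants are non-zero. \emph{Tool 2:} if $H_0 \subseteq T$ is a non-trivial $G$-normal $p$-torsion subgroup, then $H_0^F \subseteq Z(G) \cap T$ is non-trivial by orbit counting for the finite $p$-group $F$ acting on the non-trivial $p$-group $H_0(\alg{k})$.

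I would then split into three cases. \emph{Case 1: $\dim N > 0$.} Here $N^0 \subseteq G^0 = T$, and (using perfectness of $k$) the reduced subscheme $(N^0)_{\mathrm{red}}$ is a non-trivial $G$-normal subtorus $S$; taking $H_0 := S[p]$ and applying Tool 2 followed by Tool 1 finishes this case. In the remaining cases $N$ is finite with $p \mid |N|$, and I would use the following key preliminary: the conjugation action of the connected group $T$ on the finite group scheme $N$ factors through the discrete automorphism scheme of $N$, so it is trivial; hence $N \subseteq Z_G(T)$ and in particular $N \cap T \subseteq Z(N)$. \emph{Case 2a: $p \mid |N \cap T|$.} Take $H_0 := (N \cap T)[p]$ and apply Tools 2, 1 as before.

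\emph{Case 2b: $|N \cap T|$ is coprime to $p$.} Then $\pi(N) \subseteq F$ is a non-trivial $p$-group, and the extension
\[
1 \to N \cap T \to N \to \pi(N) \to 1
\]
is central with coprime orders. Schur--Zassenhaus gives a complement $P \cong \pi(N)$, and centrality of $N \cap T$ combined with coprimality forces this complement to be \emph{unique} (the set of complements is a torsor under $\Hom(\pi(N), N \cap T) = 0$). Thus $P$ is characteristic in $N$ and hence $G$-normal. Since $T$ acts trivially on the finite $p$-group $P$, the $G$-action on $P$ factors through $F$, and orbit counting gives $P^F \ne 1$. Elements of $P^F$ are fixed by both $T$ and $F$ under conjugation, so $P^F \subseteq Z(G) \cap N$. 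Since $P^F$ lies in the abelian group $Z(G)$, the $p$-torsion $H := P^F[p]$ is a non-trivial finite multiplicative-type subgroup of $Z(G) \cap N$, and Tool 1 completes the argument.

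The main obstacle is Case 2b: when the $p$-content of $\ker\rho$ sits ``above'' the torus, no central split $p$-torsion element of $G$ can be captured directly from the torus mechanism used in Cases 1 and 2a. The uniqueness of the Sylow complement $P$, forced by the centrality and coprimality in the sequence $1 \to N \cap T \to N \to \pi(N) \to 1$, is what promotes $P$ to a $G$-stable subgroup, after which the classical $p$-group fixed-point principle produces the needed element of $C(G)$.
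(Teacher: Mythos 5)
Your overall strategy is essentially the paper's: the paper proves the equivalent statement (Proposition 6.1, applied to $N=\Ker\rho$) using the same two mechanisms you isolate as Tools 1 and 2 --- fixed-point counting for the $p$-group $F$ acting on a finite normal $p$-torsion subgroup to land in $Z(G)[p]$, then passage to the maximal split subgroup over the $p$-special field to land in $C(G)=\Split(Z(G)[p])$ --- and the same Schur--Zassenhaus-type splitting of $1\to N\cap T\to N\to\pi(N)\to 1$ in the coprime case (the paper cites Schneider for the scheme-theoretic version, and implicitly uses, as you make explicit, that the complement is characteristic in $N$ and hence normal in $G$). Your case division and the explicit uniqueness-of-complement argument are fine.

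The one step that does not hold up as written is your ``key preliminary'': the claim that the conjugation action of $T$ on the finite group scheme $N$ is trivial ``because it factors through the discrete automorphism scheme of $N$.'' In characteristic $q>0$ (with $q\neq p$, which the paper allows), $N\cap T$ can contain infinitesimal multiplicative factors such as $\mu_q$, since $\Ker\rho\cap T$ need not be \'etale; in that case $\underline{\Aut}(N)$ is \emph{not} discrete --- the subgroup of automorphisms inducing the identity on $N\cap T$ and on $\overline{N}=N/(N\cap T)$ is the crossed-homomorphism scheme $Z^1(\overline{N},N\cap T)$, which can have a nontrivial infinitesimal multiplicative part (e.g.\ a copy of $\mu_q^2$ inside $\underline{\Aut}(\mu_q^2\rtimes\ZZ/p)$ when $p$ divides $|\GL_2(\FF_q)|$). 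The \emph{conclusion} you want is still true, because a torus admits no nontrivial homomorphism to any finite group scheme, but that requires a d\'evissage of $\underline{\Aut}(N)$ (or representability arguments) that you have not supplied. The paper sidesteps this entirely: it first shows $(N\cap T)[p]=\{1\}$, so that $(N,T[p^r])\subseteq N\cap T[p^r]=\{1\}$ for all $r$, and then deduces $(N,T)=\{1\}$ by Zariski density, taking explicit care with the non-smooth case by descending the commutator map to $\overline{N}\times T$ (where $\overline{N}$ is smooth because its order is a power of $p\neq\Char k$). Note also that you only actually need the preliminary in Case 2b, where $|N\cap T|$ is prime to $p$; there the cheapest repair is to observe that $N\cap T$ commutes with $T$ for free (both lie in the abelian group $T$), so the commutator map descends to $\overline{N}\times T$, and then argue on $\sep{k}$-points of the smooth group $\overline{N}$ exactly as the paper does. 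In characteristic zero your argument is fine as stated.
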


We will prove Proposition~\ref{prop.main2} in Section~\ref{sect.faithful}.

\begin{rem} \label{rem.FiniteImpliesTorus}
The inequality 
$\min \dim \rho - \dim G \le \ed(G;p)$ 
of Theorem~\ref{thm.MainTheorem},
where $\rho$ ranges over all $p$-faithful representations
of $G$, fails if we take the minimum over just the faithful
(rather than $p$-faithful) representations, 
even in the case where $G = T$ is a torus.

Indeed, choose $T$ so that
the $\Gal(\sep{k}/k)$-character lattice $X(T)$ of $T$ is a direct 
summand of a permutation lattice, but $X(T)$ itself
is not permutation (see \cite[8A.]{CTS1} for 
an example of such a lattice). 

In other words, there exists a $k$-torus $T'$ such that
$T \times T'$ is quasi-split. This implies that
$H^1(K, T \times T') = \{1 \}$ and thus $H^1(K, T) = \{ 1 \}$
for any field extension $K/k$. Consequently $\ed(T; p) = 0$ for every prime $p$.

On the other hand, we claim that the dimension of the minimal 
faithful representation of $T$ is strictly bigger than $\dim T$. 
Assume the contrary. Then by \cite[Lemma 2.6]{LMMR} there
exists a surjective homomorphism $f \colon P \to X(T)$
of $\Gal(\sep{k}/k)$-lattices, where $P$ is permutation and 
$\rank P = \dim T$. This implies that $f$ has finite kernel 
and hence, is injective. We conclude that $f$ is an isomorphism and 
thus $X(T)$ is a permutation $\Gal(\sep{k}/k)$-lattice, a contradiction.
\qed
\end{rem}

\section{Dimensions of irreducible representations}
\label{sect.DimIrredReps}

The purpose of this section is to prove Proposition~\ref{prop3.2}.
Recall that $G^0$ is a torus, which we denote by $T$ and
$G/G^0$ is a finite $p$-group which we denote by $F$.
The case, where $T = \{ 1 \}$, i.e., $G = F$ is an arbitrary 
(possibly twisted) finite $p$-group, is established in 
the course of the proof of~\cite[Theorem~7.1]{LMMR}. 
Our proof of Proposition~\ref{prop3.2} below is based 
on leveraging this case as follows.

\begin{lem} \label{lem3.3}
Let $G$ be an algebraic group defined over a field $k$ and
\[ F_1 \subseteq F_2 \subseteq \dots \subset G \]
be an ascending sequence of finite $k$-subgroups whose union
$\cup_{n \ge 1} F_n$ is Zariski dense in $G$.
If $\rho \colon G \to \GL(V)$ is an irreducible representation
of $G$ then $\rho_{| \, F_i}$ is irreducible for
sufficiently large integers $i$.
\end{lem}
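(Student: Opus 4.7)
The plan is to run a Noetherian argument on Grassmannians. For each $d$ with $0 < d < \dim V$, let $X_d := \Gr_d(V)$, viewed as a projective $k$-scheme with a $G$-action via $\rho$. For any closed subgroup scheme $H \subseteq G$, the fixed locus $X_d^H \subseteq X_d$ (whose points parameterize the $H$-invariant $d$-dimensional subspaces of $V$) is closed in $X_d$. The chain $F_1 \subseteq F_2 \subseteq \cdots$ then produces a descending chain of closed subschemes
\[ X_d^{F_1} \;\supseteq\; X_d^{F_2} \;\supseteq\; X_d^{F_3} \;\supseteq\; \cdots, \]
which, by Noetherianity of $X_d$, must stabilize past some index $N_d$.

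Next I would argue that once stabilization occurs, $F_{N_d}$-invariance agrees with $G$-invariance on $X_d$. Given a point $W$ of $X_d^{F_{N_d}}$, the stabilizer $\Stab_G(W) \subseteq G$ is a closed subgroup that, by stabilization, contains every $F_n$; its underlying closed set therefore contains the Zariski-dense union $\bigcup_n F_n$, forcing $\Stab_G(W) = G$. Hence $W$ is $G$-invariant, and $X_d^{F_{N_d}} = X_d^G$. Setting $N := \max_{0 < d < \dim V} N_d$ (a max over a finite set), every $F_N$-invariant $k$-subspace of $V$ of dimension strictly between $0$ and $\dim V$ is $G$-invariant. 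Since $\rho$ is irreducible, no such $G$-invariant subspace exists, and so $\rho|_{F_i}$ is irreducible for every $i \geq N$.

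The step that requires the most care is the passage from Zariski density of $\bigcup F_n$ as a subset of $G$ to the scheme-theoretic equality $\Stab_G(W) = G$. On the level of underlying topological spaces this is automatic, and in the intended application $G$ is smooth (hence reduced), so a closed subscheme with the full underlying space is necessarily $G$ itself; in greater generality the natural reading of ``Zariski dense'' is precisely that no proper closed subscheme of $G$ contains every $F_n$, which is exactly what is needed to conclude the argument.
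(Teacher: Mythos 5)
Your proof is correct and follows essentially the same route as the paper's: descending chains of fixed-point loci in the Grassmannians $\Gr(d,V)$, stabilized by Noetherianity, with Zariski density of $\bigcup_n F_n$ forcing the stable term to coincide with the $G$-fixed locus. The only difference is cosmetic: you stabilize the chain first and then identify its limit via the stabilizer-of-a-point argument, whereas the paper first identifies $\bigcap_i \Gr(d,V)^{F_i}$ with $\Gr(d,V)^G$ and then invokes Noetherianity.
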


\begin{proof}
For each $d = 1, ..., \dim V - 1$ consider the $G$-action
on the Grassmannian $\Gr(d, V)$ of $d$-dimensional subspaces of $V$.
Let $X^{(d)} = \Gr(d, V)^G$ and
$X_i^{(d)} = \Gr(d, V)^{F_i}$
be the subvariety of $d$-dimensional $G$- (resp.~$F_i$-)invariant
subspaces of $V$.
Then $X_1^{(d)} \supseteq X_2^{(d)} \supseteq \ldots$ and
since the union of the groups $F_i$ is dense in $G$,
\[ X^{(d)} = \cap_{i \ge 0} X_i^{(d)} \, . \]
By the Noetherian property of $\Gr(d, V)$, we have
$X^{(d)} = X_{m_d}^{(d)}$ for some $m_d \ge 0$.

Since $V$ does not have any $G$-invariant $d$-dimensional
$k$-subspaces, we know that $X^{(d)}(k) = \emptyset$.
Thus, $X_{m_d}^{(d)} (k) = \emptyset$, i.e.,
$V$ does not have any $F_{m_d}$-invariant $d$-dimensional
$k$-subspaces.
Setting $m \colonequals \max \{ m_1, \dots, m_{\dim V - 1} \}$, we see that
$\rho_{| \, F_m}$ is irreducible.
\end{proof}

We now proceed with the proof of Proposition~\ref{prop3.2}.
By Lemma~\ref{lem3.3}, it suffices to
construct a sequence of finite $p$-subgroups
\[ F_1 \subseteq F_2 \subseteq \dots \subset G \]
defined over $k$ whose union $\cup_{n \ge 1} F_n$ is Zariski
dense in $G$.

In fact, it suffices to construct one $p$-subgroup
$F' \subset G$, defined over $k$ such that $F'$
surjects onto $F$. Indeed, once $F'$ is constructed,
we can define $F_i \subset G$ as
the subgroup generated  by $F'$ and $T[p^i]$, for every $i \ge 0$.
Since $\cup_{n \ge 1} F_n$ contains both $F'$ and
$T[p^i]$, for every $i \ge 0$ it is Zariski dense
in $G$, as desired.

The following lemma, which establishes the existence of $F'$,
is thus the final step in our proof of Proposition~\ref{prop3.2}.

\begin{lem} \label{lem.schneider}
Let $1 \to T \to G \xrightarrow{\pi} F \to 1$ be an extension
of a $p$-group $F$ by a torus $T$ over a field $k$, as in~\eqref{e.basic-sequence}.
Then $G$ has a $p$-subgroup $F'$ with $\pi(F')= F$.
\end{lem}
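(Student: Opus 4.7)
\emph{Plan.} The strategy is to push out \eqref{e.basic-sequence} along a power map on $T$ to kill the extension class, then lift a section back to $G$. Set $N=|F|$, a power of $p$, and form the pushout of $k$-group schemes
\[
\begin{CD}
1 @>>> T @>>> G @>{\pi}>> F @>>> 1 \\
@. @V{[N]}VV @V{\varphi}VV @| @. \\
1 @>>> T @>>> G' @>>> F @>>> 1\, .
\end{CD}
\]

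The key input is that the class of the top row in $\mathrm{Ext}^1_k(F,T)$ is annihilated by $N=|F|$: this is the scheme-theoretic analogue of the classical group-cohomology fact $|F|\cdot H^2(F,A)=0$, proved by the usual restriction--corestriction (transfer) argument, which applies in our setting since $F$ and $T[N]$ are both \'etale (using $\Char k\ne p$). Hence the bottom row splits over $k$, and I may fix a $k$-section $\sigma\colon F \hookrightarrow G'$, identifying $G'\simeq T \rtimes F$.

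I then define
\[
F' \colonequals \varphi^{-1}(\sigma(F)) \subseteq G,
\]
a closed $k$-subgroup of $G$. Since $\ker\varphi=\ker[N]=T[N]$, the subgroup $F'$ fits into a short exact sequence
\[
1 \to T[N] \to F' \to F \to 1.
\]
The group scheme $T[N]$ is finite \'etale of order $N^{\dim T}$, which is a power of $p$; combined with $F$ being a finite $p$-group, this forces $F'$ to be a finite $p$-group. By construction $\pi(F')=F$, completing the proof.

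The main obstacle I foresee is verifying cleanly the key input, namely that the extension class is $N$-torsion in $\mathrm{Ext}^1_k(F,T)$ for $k$-group schemes (rather than only in abstract group cohomology after passing to $\sep{k}$). This requires some care but is standard. Alternatively, one can avoid this formalism by inducting on $|F|$: choose a central subgroup $Z\subseteq F$ of order $p$, apply the induction hypothesis to $G/\pi^{-1}(Z)$, and lift generators of $Z$ using the $p$-divisibility of $T$ together with the centrality of $Z$.
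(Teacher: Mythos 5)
Your proof has the same architecture as the paper's: establish that the class of \eqref{e.basic-sequence} in $\Ext^1(F,T)$ is annihilated by a power of $p$, then use the exactness of $\Ext^1(F,T[p^a])\to\Ext^1(F,T)\xrightarrow{\times p^a}\Ext^1(F,T)$ to realize $G$ as a pushout of a finite $p$-group $F'$ along $T[p^a]\hookrightarrow T$. Your pushout-then-pull-back-a-section diagram is that exact sequence read in the opposite direction, and the bookkeeping in your last two paragraphs ($\ker\varphi=T[N]$, $F'$ finite of $p$-power order, $\pi(F')=F$) is correct.

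The gap is precisely the ``key input'' you defer. The restriction--corestriction argument (Schneider) lives in Hochschild cohomology, and $H^2(F,T)$ computes not $\Ext^1(F,T)$ but the subgroup $\Ex^1(F,T)$ of classes of extensions admitting a scheme-theoretic section, i.e.\ those with $G(K)\to F(K)$ surjective for all $K/k$. The given extension need not lie in this subgroup: the fibre of $G\to F$ over a $K$-point of $F$ is a $T$-torsor, and $H^1(K,T)$ may well be nontrivial. So transfer does not directly annihilate $[G]$ by $N$, and your parenthetical justification (``$F$ and $T[N]$ are both \'etale'') is beside the point, since the coefficient module is $T$ itself, not $T[N]$. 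The paper closes this hole by resolving $T$ by $F$-module schemes $1\to T\to M_1\to M_2\to 1$ so that $\Ext^1(F,T)$ maps to $H^2(F,M_1)\simeq\Ex^1(F,M_1)$, and a two-step diagram chase then yields only $m^2\cdot\Ext^1(F,T)=0$ for $m=|F|$, not $m\cdot\Ext^1(F,T)=0$. The weaker exponent is harmless for your construction (replace $[N]$ by $[N^2]$; $T[N^2]$ is still a finite $p$-group), but the torsion statement is the actual content of the lemma and cannot be waved through as ``standard''. Your fallback induction is also not viable as sketched: $G/\pi^{-1}(Z)$ is just the finite group $F/Z$ and carries no torus, so it is unclear to what extension the inductive hypothesis would be applied.
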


In the case where $F$ is split and $k$ is algebraically closed
this is proved in~\cite[p. 564]{cgr}; cf. also the proof of
\cite[Lemme 5.11]{bs}.

\begin{proof}
Denote by $\Ext^1(F,T)$ the group of equivalence
classes of extensions of $F$ by $T$.
We claim that $\Ext^1(F,T)$ is torsion.
Let $\Ex^1(F,T)\subset\Ext^1(F,T)$ be the classes
of extensions which have a scheme-theoretic
section (i.e. $G(K)\to F(K)$ is surjective for all $K/k$).
There is a natural isomorphism $\Ex^1(F,T)\simeq H^2(F,T)$, where
the latter one denotes Hochschild cohomology,
see~\cite[III. 6.2, Proposition]{DG}.
By~\cite{Sc2} the usual restriction-corestriction
arguments can be applied in Hochschild cohomology
and in particular, $m\cdot H^2(F,T)=0$ where $m$ is the order of $F$.
Now recall that $M\mapsto\Ext^i(F,M)$ and $M\mapsto\Ex^i(F,M)$
are both derived functors of the crossed homomorphisms
$M\mapsto\Ex^0(F,M)$, where in the first case $M$
is in the category of $F$-module sheaves and in
the second, $F$-module functors, cf. \cite[III. 6.2]{DG}.
Since $F$ is finite and $T$ an affine scheme,
by \cite[Satz 1.2 \& Satz 3.3]{Sc1} there is an exact sequence
of $F$-module schemes $1\to T \to M_1 \to M_2 \to 1$
and an exact sequence
$\Ex^0(F,M_1)\to\Ex^0(F,M_2)\to\Ext^1(F,T)\to H^2(F,M_1)\simeq\Ex^1(F,M_1)$.
The $F$-module sequence also induces a long exact
sequence on $\Ex(F,*)$ and we have a diagram
\[
\xymatrix{
&&\Ext^1(F,T)\ar@{->}[dr]&\\
\Ex^0(F,M_1)\ar@{->}[r]&\Ex^0(F,M_2)\ar@{->}[ur]\ar@{->}[dr]&&\Ex^1(F,M_1)\\
&&\Ex^1(F,T)\ar@{->}[ur]\ar@{^{(}->}[uu]&
}
\]
An element in $\Ext^1(F,T)$ can thus be killed first
in $\Ex^1(F,M_1)$ so it comes from $\Ex^0(F,M_2)$.
Then kill its image in $\Ex^1(F,T)\simeq H^2(F,T)$, so it comes from
$\Ex^0(F,M_1)$, hence is $0$ in $\Ext^1(F,T)$.
In particular, multiplying twice by the order $m$
of $F$, we see that $m^2\cdot\Ext^1(F,T)=0$.  This proves the claim.

Now let us consider the exact sequence
$1\to N\to T\xrightarrow{\times m^2} T\to 1$,
where $N$ is the kernel of multiplication by $m^2$.
Clearly $N$ is finite and we have an induced exact sequence
\[
\Ext^1(F,N)\to\Ext^1(F,T)\xrightarrow{\times m^2}\Ext^1(F,T)
\]
which shows that the given extension $G$ comes from an extension $F'$
of $F$ by $N$.
Then $G$ is the pushout of $F'$ by $N\to T$ and we can
identify $F'$ with a subgroup of $G$.
\end{proof}

\section{Proof of Proposition~\ref{prop.main2}}
\label{sect.faithful}

We will prove Proposition~\ref{prop.C(G)} below;
Proposition~\ref{prop.main2} is an immediate consequence, with
$N = \Ker\rho$. Recall that $G$ is an algebraic group over a $p$-special field $k$ of characteristic $\neq p$ 
such that $G^0$ is a torus, $G/G^0$ is a finite $p$-group, 
as in~\eqref{e.basic-sequence}, and $C(G)$ is the split central 
$p$-subgroup of $G$ defined in Section~\ref{sect.proof-main-thm}.

\begin{prop} \label{prop.C(G)}
Let $N$ be a normal $k$-subgroup of
$G$. Then the following conditions are equivalent:

\smallskip
(i) $N$ is finite of order prime to $p$,

\smallskip
(ii) $N \cap C(G) = \{ 1 \}$,

\smallskip
(iii) $N \cap Z(G)[p] = \{ 1 \}$,
\end{prop}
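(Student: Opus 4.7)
The implications $(i) \Rightarrow (iii) \Rightarrow (ii)$ are immediate: $(i) \Rightarrow (iii)$ holds because $N \cap Z(G)[p]$ has order dividing both $|N|$ (prime to $p$) and a power of $p$, and $(iii) \Rightarrow (ii)$ holds because $C(G) \subseteq Z(G)[p]$. The substantive direction is $(ii) \Rightarrow (i)$: assuming $N \cap C(G) = \{1\}$, I must conclude that $N$ is finite of order prime to $p$.

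The heart of the argument is the following key lemma: \emph{any nontrivial normal $k$-subgroup $P \subseteq G$ which is either a finite $p$-group or a $p$-torsion $k$-subgroup of $T$ satisfies $P \cap C(G) \neq \{1\}$.} To prove this, pass to $Z(P)$, which equals $P$ when $P \subseteq T$ and is nontrivial when $P$ is a $p$-group. Since $P$ is étale (as $\Char k \neq p$) and $T$ is connected, the morphism $T \to \Aut(P)$ is trivial, so $T$ centralizes $P$; hence $Z(P) \cdot T$ is an abelian extension of a $p$-group by a torus and therefore of multiplicative type by the structural observation preceding Proposition~\ref{prop3.2}. The scheme-theoretic $F$-fixed subgroup $Z(P)[p]^F$ is then nontrivial by the classical argument for a $p$-group $F$ acting on an $\mathbb{F}_p$-vector space, and lies in $Z(G)[p]$ (being centralized by both $T$ and $F$). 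Because $k$ is $p$-special, $\Gal(\alg{k}/k)$ is pro-$p$, and the parallel coinvariant argument applied to its action on the character module of $Z(P)[p]^F$ yields $\Split(Z(P)[p]^F) \neq \{1\}$; by monotonicity of $\Split$, this group sits inside $\Split(Z(G)[p]) = C(G)$, and of course inside $P$.

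With the Key Lemma in hand, I treat three cases. \textbf{(1)} If $N^0 \neq \{1\}$, it is a nontrivial subtorus of $T$ normal in $G$, and $P := N^0[p]$ triggers the lemma, a contradiction. Hence $N^0 = \{1\}$, and since $k$ is perfect, $N$ is finite and étale. \textbf{(2)} If $|N \cap T|$ is divisible by $p$, take $P := (N \cap T)[p]$. \textbf{(3)} Otherwise, set $A := N \cap T$ and $\bar N := N/A$; suppose for contradiction that $|N|$ is still divisible by $p$. Then $\bar N$ is a nontrivial $p$-subgroup of $F$, and here lies the \textbf{main obstacle}: I must realize the Sylow $p$-subgroup of $N$ as a $G$-normal $k$-subgroup. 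The decisive observation is that, because $N$ is étale and $T$ connected, the conjugation morphism $T \to \Aut(N)$ is trivial, so $T$ centralizes $N$; this forces $\bar N \subseteq \ker(F \to \Aut T)$ and hence acts trivially on $A \subseteq T$. Schur-Zassenhaus (applicable since $\gcd(|A|, |\bar N|) = 1$) then splits $1 \to A \to N \to \bar N \to 1$ as a \emph{direct} product $N = A \times S$. The $p$-Sylow $S$ is characteristic in $N$, hence a $k$-subgroup normal in $G$, and the Key Lemma applied to $S$ delivers the final contradiction.
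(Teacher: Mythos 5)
Your overall strategy is essentially the paper's own: your Key Lemma is the paper's central Claim (counting fixed points of the $p$-group $F(\sep{k})$ acting on the $\sep{k}$-points of a finite $p$-subgroup commuting with $T$) fused with the $\Split$-coinvariants step that the paper uses separately to prove (ii)$\Rightarrow$(iii); and your case (3) --- rigidity forcing $T$ to centralize $N$, then Schur--Zassenhaus splitting $N$ as a direct product and feeding the $p$-Sylow factor back into the Key Lemma --- is exactly the paper's endgame. Reorganizing the cycle so that (ii)$\Rightarrow$(i) is proved directly is harmless.

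There is, however, a genuine gap when $\Char k = q > 0$: you treat $N$ as if it were reduced, but a normal $k$-subgroup of $G$ need not be smooth (the paper's proof has an explicit branch ``if $N$ is not smooth,'' and the intended application in Proposition~\ref{prop.main2} is $N=\Ker\rho$, which can be non-reduced). Concretely, take $N=T[q]$: this is normal in $G$ and satisfies (ii), yet $N^0=N\neq\{1\}$ is infinitesimal rather than a subtorus, and $N^0[p]=\{1\}$, so your case (1) does not trigger the Key Lemma and produces no contradiction; consequently the deduction ``hence $N^0=\{1\}$ and $N$ is finite and \'etale'' fails (perfectness of $k$ does not make a finite group scheme \'etale). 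This matters downstream, because the ``decisive observation'' of case (3) --- triviality of $T\to\Aut(N)$ --- is justified precisely by $N$ being \'etale. The statement itself survives for such $N$ (the infinitesimal part has $q$-power, hence prime-to-$p$, order), but your argument does not cover them. The repair is what the paper does: apply the fixed-point claim only to $(N\cap T)[p]$, which is automatically \'etale since $\Char k\neq p$, deduce from the character module that $N\cap T$ is finite of order prime to $p$ (possibly non-reduced), show $(N,T[p^r])=\{1\}$ for all $r$, and then prove $(N,T)=\{1\}$ by descending the commutator map to $\overline{N}\times T\to G$ and using the smoothness of $\overline{N}$ for the Zariski-density argument. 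With that adjustment your proof coincides with the paper's.
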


In particular, taking $N = G$, we see that $C(G) \neq \{ 1 \}$ if $G\neq\{1\}$.

\begin{proof}
(i)$\Longrightarrow$ (ii) is obvious, since $C(G)$
is a $p$-group.

(ii) $\Longrightarrow$ (iii). Assume the contrary:
$A \colonequals N \cap Z(G)[p] \neq \{ 1 \}$.
By \cite[Section 2]{LMMR}
\[ \{ 1 \} \neq C(A) \subset N \cap C(G) \, , \]
contradicting (ii).

Our proof of the implication (iii) $\Longrightarrow$ (i)
will rely on the following  assertion:

\smallskip
{\bf Claim:}
Let $M$ be a non-trivial normal finite $p$-subgroup of $G$ such
that the commutator $(G^0,M)$ is trivial. Then $M \cap Z(G)[p] \neq \{1\}$.

\smallskip
To prove the claim, note that $M(\sep{k})$ is non-trivial and
the conjugation action of $G(\sep{k})$ on $M(\sep{k})$ factors through
an action
of the $p$-group $(G/G^0)(\sep{k})$. Thus each orbit has $p^n$ elements for
some $n \ge 0$; consequently, the number of fixed points is
divisible by $p$. The intersection $(M \cap Z(G))(\sep{k})$ is
precisely the fixed point set for this action; hence,
$M \cap Z(G)[p] \neq \{ 1 \}$. This proves the claim.

We now continue with the proof of the implication (iii) $\Longrightarrow$ (i).
For notational convenience, set $T \colonequals G^0$.
Assume that $N \triangleleft G$ and $N \cap Z(G)[p] = \{ 1 \}$.
Applying the claim to the normal subgroup
$M \colonequals (N \cap T)[p]$ of $G$, we see that
$(N\cap T)[p] = \{ 1 \}$, i.e., $N \cap T$ is a finite group of order
prime to $p$.  The exact sequence
\begin{equation} \label{extension2}
1 \to N\cap T \to N \to \overline{N} \to 1 \, ,
\end{equation}
where $\overline{N}$ is the image of $N$ in $G/T$,
shows that $N$ is finite. Now observe that
for every $r \ge 1$, the commutator $(N, T[p^r])$ is a $p$-subgroup
of $N \cap T$. Thus $(N, T[p^r]) = \{ 1 \}$ for every $r \ge 1$.
We claim that this implies $(N, T) = \{ 1 \}$.
If $N$ is smooth, this is straightforward;
see~\cite[Proposition 2.4, p. 59]{Bo}. If $N$ is not smooth,
note that the map $c \colon N \times T \to G$
sending $(n, t)$ to the commutator $n t n^{-1} t^{-1}$
descends to $\overline{c} \colon
\overline{N} \times T \to G$ (indeed, $N \cap T$ clearly commutes with $T$).
Since $|\overline{N}|$ is a power of $p$
and $\Char(k) \ne p$, $\overline{N}$ is smooth over $k$, and we can
pass to the separable closure $\sep{k}$ and apply
the usual Zariski density argument to show that
the image of $\overline{c}$ is trivial.

We thus conclude that $N \cap T$ is central in $N$.
Since $\gcd(|N\cap T|, \overline{N})=1$, by
\cite[Corollary 5.4]{Sch} the extension~\eqref{extension2}
splits, i.e., $N \simeq (N \cap T) \times \overline{N}$. This turns
$\overline{N}$ into a finite $p$-subgroup of $G$ with $(T,\overline{N})=\{1\}$. 
The claim implies that $\overline{N}$ is trivial. 
Hence $N=N\cap T$ is a finite group of order prime to $p$, as claimed.

This completes the proof of Proposition~\ref{prop.C(G)} and 
thus of Theorem~\ref{thm.MainTheorem}.
\end{proof}

\section{Faithful versus generically free}
\label{sect.tame}

Throughout this section we will assume that
$k$ is a $p$-special field of characteristic $\ne p$ and
$G$ is a (smooth) algebraic $k$-group such that 
$T \colonequals G^0$ is a torus and $F \colonequals G/G^0$ is 
a finite $p$-group, as in~\eqref{e.basic-sequence}. As we have seen
in the introduction, some groups of this type have faithful linear
representations that are not generically free. In this section
we take a closer look at this phenomenon.

If $F'$ is a subgroup of $F$ then we will use the notation $G_{F'}$ 
to denote the subgroup $\pi^{-1}(F')$ of $G$. Here 
$\pi$ is the natural projection $G \to G/T = F$.

\begin{lem} \label{lem.gap}
Suppose $T$ is central in $G$. Then 

\smallskip
(a) $G$ has only finitely many $k$-subgroups
$S$ such that $S \cap T = \{ 1 \}$.

\smallskip
(b) Every faithful action of $G$ on a geometrically irreducible variety $X$ 
is generically free. 
\end{lem}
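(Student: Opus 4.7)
For part (a), my plan is to stratify by the image subgroup. Any $k$-subgroup $S \subset G$ with $S \cap T = \{1\}$ is mapped isomorphically by $\pi$ onto a $k$-subgroup $F' \subseteq F$. Since $F$ is finite, there are only finitely many possibilities for $F'$, so it suffices to bound, for each fixed $F'$, the number of $S$ with $\pi(S) = F'$. Such $S$ correspond to $k$-group sections $s \colon F' \to G$ of $\pi$. Fixing one such section $s_0$, every other has the form $f \mapsto s_0(f) c(f)$ where $c \colon F' \to T$ is a crossed homomorphism for the conjugation action of $F'$ on $T$; centrality of $T$ collapses this to $c \in \Hom_{k\text{-gr}}(F', T)$. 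The latter group is finite: $F'$ is \'etale of $p$-power order (since $\Char k \neq p$), so $\Hom_{k\text{-gr}}(F', T)$ embeds into $\Hom(F'(\bar k)^{\mathrm{ab}}, T(\bar k))$, which is a finite product of $p$-torsion subgroups $T[p^a](\bar k)$.

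For part (b), I first invoke the standard fact that a faithful torus action on a geometrically irreducible variety is generically free: the stabilizer of the generic point of $X$ descends (by constancy of stabilizers on a dense open for torus actions) to a $k$-subgroup $H \subseteq T$ which acts trivially on that dense open and hence, by separatedness and density, on all of $X$, so faithfulness forces $H = \{1\}$. Thus I obtain a dense open $V \subseteq X$ with $\Stab_T(v) = \{1\}$ for all $v \in V$.

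Given such a $V$, I list the finitely many $k$-subgroups $S_1 = \{1\}, S_2, \dots, S_n$ produced by (a) (passing to $\bar k$ if needed). For $v \in V$, the scheme-theoretic stabilizer $\Stab_G(v)$ meets $T$ trivially and hence equals some $S_i$. For each $i \geq 2$ the fixed locus $X^{S_i}$ is closed and, by faithfulness of $G$'s action on $X$, properly contained in $X$. Setting
\[
U := V \setminus \bigcup_{i \geq 2} X^{S_i},
\]
gives a non-empty (hence dense, by irreducibility) open subset of $X$, which is $G$-invariant since $T$ is normal in $G$ and $G$-conjugation permutes the finite collection $\{S_i\}$. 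For $v \in U$, we must have $\Stab_G(v) = S_1 = \{1\}$; \'etaleness of $\Stab_G(v)$ (forced by $\Stab_G(v) \cap T = \{1\}$) ensures scheme-theoretic triviality.

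The main obstacle I anticipate is the auxiliary torus fact (folklore but not stated in the paper) and the careful bookkeeping of scheme-theoretic versus set-theoretic stabilizers; the combinatorial core --- reducing to a finite stratification via (a) and then eliminating the non-trivial strata using faithfulness --- is then routine.
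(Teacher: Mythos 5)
Your proof is correct and follows essentially the same route as the paper: in (a) you fix the image subgroup, fix one section, and bound the remaining sections by torsion in the central torus (the paper counts $e$-torsion elements directly where you count $\Hom(F',T)$, which is the same estimate); in (b) you pass to the locus where the $T$-action is free, invoke (a) for a finite list of possible stabilizers, and delete their fixed loci. The ``auxiliary torus fact'' you sketch is exactly what the paper cites as \cite[Proposition 3.7(A)]{Lo}, and the $G$-invariance of the final open set is handled in the paper by intersecting the finitely many $G(\alg{k})$-translates, which you may as well do too rather than arguing invariance directly.
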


\begin{proof} After replacing $k$ by its algebraic closure $\alg{k}$
we may assume without loss of generality that $k$ is algebraically closed.

\smallskip
(a) Since $F$ has finitely many subgroups, it suffices to show that for every
subgroup $F_0 \subset F$, there are only finitely many $S \subset G$
such that $\pi(S) = F_0$ and $S \cap T = \{ 1 \}$.

After replacing $G$ by $G_{F_0}$, we may assume that $F_0 = F$.
In other words, we will show that $\pi$ has at most finitely many sections
$s \colon F \to G$. Fix one such section, $s_0 \colon F \to G$.
Denote the exponent of $F$ by $e$.
Suppose $s \colon F \to G$ is another section. Then for every 
$f \in F(k)$, we can write $s(f) = s_0(f) t$ for some $t \in T$.
Since $T$ is central in $G$, $t$ and $s_0(f)$ commute. Since
$s(f)^e = s_0(f)^e = 1$, we see that $t^e = 1$. In other words, 
$t \in T(k)$ is an $e$-torsion element,
and there are only finitely many $e$-torsion elements in $T$. 
We conclude that there are only finitely many choices of $s(f)$ 
for each $f \in F$. Hence, there are only finitely many 
sections $F \to G$, as claimed.

\smallskip
(b) 
The restriction of the $G$-action on $X$ to $T$ is faithful and hence, generically free;
cf., e.g.,~\cite[Proposition 3.7(A)]{Lo}. 
Hence, there exists a dense open $T$-invariant subset $U \subset X$ such that  
$\Stab_T(u) = \{ 1 \}$ for all $u\in U$. In other words, if $S = \Stab_G(u)$ 
then $S \cap T = \{ 1 \}$. 
By part (a) $G$ has finitely many non-trivial subgroups $S$ with 
this property.  Denote them by $S_1, \dots, S_n$. 
Since $G$ acts faithfully, $X^{S_i}$ is a proper 
closed  subvariety of $X$ for any $i = 1, \dots, n$. Since $X$ is irreducible,
\[ U' =  U \setminus ( X^{S_1} \cup \dots \cup X^{S_n}) \]  
is a dense open $T$-invariant subset of $X$, and the stabilizer $\Stab_G(u)$
is trivial for every $u \in U'$.
Replacing $U'$ by the intersection of its (finitely many) $G(\alg{k})$-translates,  
we may assume that $U'$ is $G$-invariant.This shows that the $G$-action on $X$ is generically free.
\end{proof}

\begin{prop} \label{prop.tame} 
\smallskip
(a) A faithful action of $G$ on a geometrically irreducible variety $X$ 
is generically free if and only if the action  
of the subgroup $G_{C(F)} \subseteq G$ on $X$ is generically free.

\smallskip
(b) A $p$-faithful action of $G$ on a geometrically irreducible variety 
$X$ is $p$-generically free if and only if the action 
of the subgroup $G_{C(F)} \subseteq G$ on $X$ is
$p$-generically free.
\end{prop}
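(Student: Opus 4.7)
I would prove part (a) first and then reduce (b) to it. The direction $(\Rightarrow)$ of (a) is trivial: $\Stab_{G_{C(F)}}(u) = \Stab_G(u) \cap G_{C(F)}$, which is trivial whenever $\Stab_G(u)$ is. For the reverse direction, my plan is to reinterpret the problem via the exact sequence $1 \to G_{C(F)} \to G \to F/C(F) \to 1$. On the dense open $U \subseteq X$ where $G_{C(F)}$ acts freely, the geometric quotient $U/G_{C(F)}$ exists (Rosenlicht-type), and the finite group $F/C(F)$ acts on it. A stabilizer comparison gives: $G$ acts generically freely on $X$ iff $G_{C(F)}$ acts generically freely and $F/C(F)$ acts generically freely on $U/G_{C(F)}$. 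Since $F/C(F)$ is finite and $U/G_{C(F)}$ is irreducible, the second condition is equivalent to faithfulness of the $F/C(F)$-action (the union of fixed loci of the finitely many non-identity elements is a proper closed subvariety). Thus the task reduces to showing that $F/C(F)$ acts faithfully on $U/G_{C(F)}$.

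The substantive step is to verify this faithfulness. Define
\[
N \colonequals \{g \in G : g u \in G_{C(F)} \cdot u \text{ for all } u \in X\},
\]
a normal subgroup of $G$ containing $G_{C(F)}$; faithfulness of the $F/C(F)$-action on $U/G_{C(F)}$ is equivalent to $N = G_{C(F)}$. Suppose for contradiction $N \supsetneq G_{C(F)}$. On the generic-free locus for $G_{C(F)}$, the $N$-orbit equals the $G_{C(F)}$-orbit by definition of $N$, so a dimension-and-component count at generic $u$ forces $\Stab_N(u)$ to be a finite subgroup of order $|N/G_{C(F)}|$, mapping isomorphically onto $N/G_{C(F)}$ under the quotient $G \to F/C(F)$. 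Its image $\bar S_u \subseteq N/T \subseteq F$ is then a complement of $C(F)$ in $N/T$. A constructibility argument shows $\bar S_u = S$ is constant on a dense open (the finitely many candidate subgroups of $F$ of this form partition $U$ into constructible pieces, and irreducibility selects one); and $G$-equivariance of the stabilizer family, together with $\pi(G) = F$, forces $S$ to be normalized by all of $F$, hence $S \triangleleft F$.

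To finish, I combine $p$-group theory with the $p$-special hypothesis. Since $S$ is a non-trivial normal subgroup of the finite $p$-group $F$, the class equation yields $S \cap Z(F) \ne 1$, and hence $S \cap Z(F)[p]$ is a non-trivial finite $k$-subgroup of $Z(F)[p]$. Over the $p$-special field $k$, the absolute Galois group $\Gal(\sep{k}/k)$ is pro-$p$, and its action on the non-trivial finite abelian $p$-group $(S \cap Z(F)[p])(\sep{k})$ has non-trivial fixed points by the orbit-counting formula, so $(S \cap Z(F)[p])(k) \ne 1$. But by construction $C(F) = \Split(Z(F)[p])$ is the maximal split subgroup, so every $k$-rational point of $Z(F)[p]$ already lies in $C(F)$. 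Consequently $(S \cap Z(F)[p])(k) \subseteq S \cap C(F)$, contradicting $S \cap C(F) = 1$ (which holds because $\bar S_u$ is complementary to $C(F)$ in $N/T$). This proves (a). For (b), I apply (a) to $G' \colonequals G/\ker\rho$: since $\ker\rho$ is finite of order prime to $p$ while $F$ is a $p$-group, necessarily $\ker\rho \subseteq T$, so $G'$ fits in the same kind of extension with the same finite quotient $F$ and the same $C(F)$; moreover $G'_{C(F)} = G_{C(F)}/\ker\rho$, so $p$-faithfulness and $p$-generic-freeness of $\rho$ for $G$ (respectively $G_{C(F)}$) correspond exactly to faithfulness and generic-freeness of the induced representation of $G'$ (respectively $G'_{C(F)}$), and part (a) closes the equivalence.

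The main obstacle I anticipate is the constructibility/continuity argument for $u \mapsto \bar S_u$ together with the passage to $S \triangleleft F$ via $G$-equivariance: this requires careful handling of the non-connectedness of $G$, the variation of stabilizers under both $T$- and $G$-conjugation, and the existence of the geometric quotient $U/G_{C(F)}$, which I would justify via Rosenlicht-type results restricted to the open locus on which $G_{C(F)}$ acts freely.
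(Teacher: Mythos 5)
Your argument is correct in substance, but it takes a genuinely different route from the paper's. The paper first quotients by the torus $T$: since a faithful $T$-action on a geometrically irreducible variety is automatically generically free (citing \cite[Proposition 3.7(A)]{Lo}), one gets a $T$-torsor $U\to Y=U/T$ with an induced $F$-action on $Y$, and the whole statement collapses to the chain ``$G$ gen.\ free on $X$ $\Leftrightarrow$ $F$ gen.\ free on $Y$ $\Leftrightarrow$ $F$ faithful on $Y$ $\Leftrightarrow$ $C(F)$ faithful on $Y$ $\Leftrightarrow$ $G_{C(F)}$ gen.\ free on $X$'', where the middle equivalence is exactly Proposition~\ref{prop.C(G)} applied to the kernel of the $F$-action on $Y$ (a normal subgroup of $F$ meeting $C(F)$ trivially must be trivial over a $p$-special field). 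You instead quotient by $G_{C(F)}$ on its free locus -- which spares you the torus fact, since generic freeness of $T\subseteq G_{C(F)}$ is your hypothesis in the nontrivial direction -- and then you re-prove the relevant case of Proposition~\ref{prop.C(G)} from scratch via the class equation, the pro-$p$-ness of $\Gal(\sep{k}/k)$, and the identification of $C(F)(k)$ with $Z(F)[p](k)$ (the last point silently uses $\mu_p\subset k$, which holds because $k$ is $p$-special of characteristic $\ne p$; for a general field ``maximal split subgroup'' and ``subgroup of $k$-points'' do not coincide). What the paper's route buys is brevity and symmetry (both directions of the ``iff'' fall out of one chain); what yours buys is self-containedness at the cost of the stabilizer-family analysis.

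On the one step you flag as the main obstacle: it is fillable, but you are working harder than necessary. For a lift $s\in G$ of $\bar s\in F$ one has $\bar s\in\bar S_u$ iff $su\in Tu$, i.e.\ iff $\bar s$ fixes the image of $u$ in $U/T$; so $\{u:\bar s\in\bar S_u\}$ is the preimage of the closed fixed locus $(U/T)^{\bar s}$, and your generic subgroup $S$ is nothing but $K_0\cap(N/T)$, where $K_0$ is the kernel of the $F$-action on $U/T$ -- an intersection of two normal subgroups, so normality is automatic and the constructibility and $G$-equivariance discussion can be skipped entirely. At that point you have rederived the paper's reduction to ``a normal subgroup of $F$ meeting $C(F)$ trivially is trivial,'' and could simply cite Proposition~\ref{prop.C(G)}. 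Your treatment of part (b) via $G/\ker$ agrees with the paper's.
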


\begin{proof} 
(a) The (faithful) $T$-action on $X$ is necessarily generically free
cf.~\cite[Proposition 3.7(A)]{Lo}. Thus 
by \cite[Expos\'{e} V, Th\'{e}or\`{e}me 10.3.1]{gabriel} 
(or \cite[Theorem 4.7]{BF}) 
$X$ has a dense open $T$-invariant subvariety $U$ 
defined over $k$, which is the total space of a $T$-torsor,
$U \to Y \colonequals U/G$, where $Y$ is also smooth and
geometrically irreducible. Since $G/T$ is finite, after replacing
$U$ by the intersection of its (finitely many) $G(\alg{k})$-translates,  
we may assume that $U$ is $G$-invariant.

The $G$-action on $U$ descends to an $F$-action on $Y$ 
(by descent). Now it is easy to see that the following conditions 
are equivalent:

\smallskip
(i) the $G$-action on $X$ is generically free, and

\smallskip
(ii) the $F$-action on $Y$ is generically free;

\smallskip
\noindent
cf.~\cite[Lemma 2.1]{LR}. 
Since $F$ is finite, (ii) is equivalent to 

\smallskip
(iii) $F$ acts faithfully on $Y$.

\smallskip
\noindent
Since $k$ is $p$-special, Proposition \ref{prop.C(G)} tells us that the kernel 
of the $F$-action on $Y$ is trivial iff the kernel of the $C(F)$-action 
on $Y$ is trivial. In other words, (iii) is equivalent to 

\smallskip
(iv) $C(F)$ acts faithfully (or equivalently, generically freely) on $Y$ 

\smallskip
\noindent
and consequently, to 

\smallskip
(v) the $G_{C(F)}$-action on $U$ (or, equivalently, on $X$) is generically 
free.

\smallskip
\noindent
Note that (iv) and (v) are the same as (ii) and (i), respectively, 
except that $F$ is replaced by $C(F)$ and $G$ by $G_{C(F)}$.
Thus the equivalence of (iv) and (v) follows from the equivalence 
of (i) and (ii).
We conclude that (i) and (v) are equivalent, as desired.

\smallskip
(b) Let $K$ be the kernel of the $G$-action on $X$, which is contained 
in $T$ by assumption. Notice that $(G/K) / (T/K) = G/T = F$. So 
part (a) says the $G/K$-action on $X$ is generically free
if and only if the $G_{C(F)}/K$-action on $X$ is generically 
free, and part (b) follows. 
\end{proof}

 \begin{cor}
 \label{cor.tame}
Consider an action of a tame $k$-group $G$ 
(see Definition~\ref{def.tame}) on a geometrically irreducible 
$k$-variety $X$.
 
\smallskip
(a) If this action is faithful then it is
generically free.
 
\smallskip
(b) If this action is $p$-faithful then it is is $p$-generically free.
\end{cor}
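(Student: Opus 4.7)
The plan is to combine the reduction principle of Proposition~\ref{prop.tame} with Lemma~\ref{lem.gap}(b). The key observation is that the tameness hypothesis is precisely what is needed to bring Lemma~\ref{lem.gap} into play: by Definition~\ref{def.tame}, $G$ is tame if and only if $T$ is central in $G_{C(F)} = \pi^{-1}(C(F))$. Thus $G_{C(F)}$ is itself an extension of the finite $p$-group $C(F)$ by a \emph{central} torus $T$, which is the setting of Lemma~\ref{lem.gap}.

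For part (a), first I would restrict the given faithful $G$-action on $X$ to the subgroup $G_{C(F)}$; this restricted action is obviously still faithful. Since $T$ is central in $G_{C(F)}$ by tameness, Lemma~\ref{lem.gap}(b) applies and shows that the $G_{C(F)}$-action on the geometrically irreducible variety $X$ is generically free. Then Proposition~\ref{prop.tame}(a) immediately upgrades this to generic freeness of the original $G$-action, completing the proof of (a).

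For part (b), the argument is parallel, with Proposition~\ref{prop.tame}(b) in place of~(a). Let $K$ denote the kernel of the $p$-faithful $G$-action on $X$, so $K$ is finite of order prime to $p$. Since $G/G^0 = F$ is a $p$-group, the image of $K$ in $F$ is simultaneously a $p$-group and of order prime to $p$, hence trivial; therefore $K \subseteq T \subseteq G_{C(F)}$, and the restricted $G_{C(F)}$-action on $X$ has the same kernel $K$ and is thus $p$-faithful as well. Passing to the quotient $G_{C(F)}/K$, whose connected component $T/K$ remains a central torus, the induced action is faithful and part (a) (or directly Lemma~\ref{lem.gap}(b)) shows it is generically free. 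This means the $G_{C(F)}$-action on $X$ is $p$-generically free, and Proposition~\ref{prop.tame}(b) transfers this conclusion back to $G$.

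I do not anticipate a serious obstacle: the entire content of the corollary is the translation of ``tame'' into the centrality hypothesis of Lemma~\ref{lem.gap}, after which Proposition~\ref{prop.tame} does the work. The only point that warrants a moment's care is the verification in (b) that the kernel of the $p$-faithful action sits inside $T$ (so that quotienting preserves both the extension structure and the tameness), which follows from the order coprimality argument above.
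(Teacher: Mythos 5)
Your proposal is correct and follows essentially the same route as the paper: part (a) is verbatim the paper's argument (tameness gives centrality of $T$ in $G_{C(F)}$, then Lemma~\ref{lem.gap}(b) plus Proposition~\ref{prop.tame}(a)), and part (b) is the paper's reduction to the faithful case by quotienting out the prime-to-$p$ kernel, merely spelled out in more detail (the paper simply notes $G/K$ is tame and applies part (a) to it). Your explicit check that $K\subseteq T$, which justifies that the quotient is still a tame extension of $F$ by a torus, is a worthwhile detail the paper leaves implicit.
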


\begin{proof} (a) Since $G$ is tame,
$T$ is central in $G_{C(F)}$. Hence, the $G_{C(F)}$-action
on $X$ is generically free by Lemma~\ref{lem.gap}(b). 
By Proposition~\ref{prop.tame}(a), the $G$-action on $X$ is generically free. 

(b) Let $K$ be the kernel of the action. Note that $G/K$ is also tame.
Now apply part (a) to $G/K$.
\end{proof}

\section{Proof of Theorem~\ref{thm.tame}}
\label{sect.gap}

In this section we will prove the following proposition, which
implies Theorem~\ref{thm.tame}(a). 
Theorem~\ref{thm.tame}(b) is an immediate consequence of part (a) 
and Theorem~\ref{thm.MainTheorem} (or alternatively, of
Corollary~\ref{cor.tame}(b) and Theorem~\ref{thm.MainTheorem}).

We continue to use the notational conventions and assumptions 
on $k$ and $G$ described at the beginning of Section \ref{sect.tame}.

\begin{prop} \label{prop.gap}
Let $\rho \colon G \to \GL(V)$ be a linear representation of $G$.

(a) If $\rho$ is faithful then
$G$ has a generically free representation of dimension
$\dim \rho + \dim T - \dim \, T^{C(F)}$.

\smallskip
(b) If $\rho$ is $p$-faithful, then
$G$ has a $p$-generically free representation of dimension
$\dim \rho + \dim T - \dim \, T^{C(F)}$.
\end{prop}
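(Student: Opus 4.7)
The strategy is to prove (a) by exhibiting an explicit companion representation $\sigma$, and to reduce (b) to (a) by quotienting out $\ker\rho$.

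\medskip

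\noindent
\emph{Construction for (a).} Given a faithful $\rho\colon G\to\GL(V)$, I will take
\[ \sigma\colon G\to\GL(W), \qquad W \;\colonequals\; \Lie(T)/\Lie(T)^{C(F)}, \]
where $G$ acts on $\Lie(T)$ via its conjugation action on $T$ (factoring through $F=G/T$). The subspace $\Lie(T)^{C(F)}$ is $F$-stable because $C(F)$ is central in $F$: if $cx=x$ for all $c\in C(F)$ and $f\in F$, then $c(fx)=f(f^{-1}cf)x=fcx=fx$. Since $\Char k\neq p$ the $C(F)$-action on $\Lie(T)$ is semisimple, so $\dim W=\dim T-\dim T^{C(F)}$, matching the claimed dimension.

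\medskip

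\noindent
\emph{Verification.} By Proposition~\ref{prop.tame}(a) it suffices to show that $\rho\oplus\sigma$ is generically free on $G_{C(F)}$. As $\rho|_T$ is faithful, the $T$-action on $V$ is generically free, so the stabilizer $S$ of a generic $(v,w)\in V\oplus W$ in $G_{C(F)}$ satisfies $S\cap T=\{1\}$, and $\pi$ identifies $S$ with a subgroup $H\subseteq C(F)$. The main step is to bound the dimension of the bad locus
\[ B \;=\; \bigcup_{\substack{S\neq 1\\ S\cap T=1}} V^S\oplus W^S \;\subseteq\; V\oplus W. \]
Fix a non-trivial $H\subseteq C(F)$. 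The subgroups $S\subseteq G_{C(F)}$ with $\pi(S)=H$ correspond (up to the finite group $\Aut(H)$) to sections of $\pi\colon\pi^{-1}(H)\to H$, and when such sections exist they form a torsor over $Z^1(H,T)$ of dimension $\dim T-\dim T^H$, since $B^1(H,T)\cong T/T^H$ has this dimension and $H^1(H,T)$ is finite. For each non-trivial $S$ in this family, $\dim V^S\le\dim V-1$ by faithfulness of $\rho$, and because $T$ acts trivially on $W$, $W^S=W^H=\Lie(T)^H/\Lie(T)^{C(F)}$ has dimension $\dim T^H-\dim T^{C(F)}$. Telescoping,
\[ (\dim V-1)+(\dim T-\dim T^H)+(\dim T^H-\dim T^{C(F)}) \;=\; \dim V+\dim W-1, \]
so the contribution to $B$ from the subgroup $H$ has codimension at least one. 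Summing over the finitely many subgroups $H\subseteq C(F)$, $B$ is a proper subvariety of $V\oplus W$, and hence a generic $(v,w)$ has trivial stabilizer.

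\medskip

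\noindent
\emph{Part (b).} If $\rho$ is $p$-faithful with kernel $N$, then Propositions~\ref{prop.main2} and~\ref{prop.C(G)} imply $N\subseteq T$ is finite of order prime to $p$. The quotient $G/N$ again has the form~\eqref{e.basic-sequence}, with torus $T/N$ of the same dimension, the same $F$, and $\dim(T/N)^{C(F)}=\dim T^{C(F)}$. Applying (a) to the induced faithful representation of $G/N$ yields a generically free $(G/N)$-representation of dimension $\dim\rho+\dim T-\dim T^{C(F)}$, whose pullback to $G$ is $p$-generically free of the same dimension. The main technical obstacle is the dimension identity $\dim Z^1(H,T)+\dim W^H=\dim W$ that drives the telescoping, together with the check that the family of sections over each $H$ really has dimension $\dim T-\dim T^H$; in case no section of $\pi$ over $H$ exists over $k$ the corresponding stratum of $B$ is simply empty.
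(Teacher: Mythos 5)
Your proof is correct, and while its skeleton matches the paper's (the same companion module $W \cong \Lie(T)/\Lie(T)^{C(F)}$, the same use of Proposition~\ref{prop.tame}(a) to reduce to $G_{C(F)}$, and the same quotient-by-$\ker\rho$ reduction of (b) to (a)), the verification that $G_{C(F)}$ acts generically freely on $V\oplus W$ is genuinely different. The paper argues structurally: it takes $K$ to be the kernel of the $G_{C(F)}$-action on $\Lie(T)$, shows $T$ is central in $K$ (a finite $p$-group acting trivially on the tangent space at the identity acts trivially on $T$ when $\Char k\neq p$), invokes Lemma~\ref{lem.gap}(b) to get that $K$ acts generically freely on $V$, observes that the finite group $G_{C(F)}/K$ acts faithfully, hence generically freely, on $W$, and stacks the two via \cite[Lemma 3.2]{mr1}. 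You instead run a direct incidence-variety dimension count: the complements $S$ of $T$ lying over a fixed $H\subseteq C(F)$ form, when nonempty, a torsor under $Z^1(H,T)$ of dimension $\dim T-\dim T^H$ (finiteness of $H^1(H,T)$ holds because it is diagonalizable and killed by $|H|$), and this dimension is exactly the codimension of $W^H$ in $W$, so the codimension $\geq 1$ of $V^S$ in $V$ forces each stratum of the bad locus to be proper. This is a valid argument and has the virtue of explaining \emph{why} the adjoint piece $W$ is precisely the right summand to add: the fixed-point codimension in $W$ cancels the dimension of the family of potential stabilizers. What you give up is the structural statement about $K$ and the appeal to Lemma~\ref{lem.gap}(b); what you should still make explicit is that the whole count takes place over $\alg{k}$ (stabilizers in the $T$-free locus inject into $C(F)$, hence are \'etale, so checking $\alg{k}$-points suffices for triviality of the scheme-theoretic stabilizer) and that each stratum $B_H$ is $G_{C(F)}$-stable because $C(F)$ is abelian, so the good open locus can be taken $G$-invariant and defined over $k$. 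In part (b) your equality $\dim (T/N)^{C(F)}=\dim T^{C(F)}$ is correct (the fixed sublattices of $X(T)$ and $X(T/N)$ have equal rank under any isogeny), and is slightly stronger than the inequality the paper uses, which already suffices after padding by a trivial summand.
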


\begin{proof} (a) $T^{C(F)}$ is preserved by the conjugation action of $G$, 
so the adjoint representation of $G$ decomposes
as $\Lie(T) = \Lie(T^{C(F)}) \oplus W$ for some $G$-representation $W$. Since the $G$-action on $\Lie(T)$ factors through $F$, the existence of $W$ follows from Maschke's Theorem.  
Let $\mu$ be the $G$-representation on $V \oplus W$. 
Then $\dim \mu = \dim \rho  + \dim T - \dim \, T^{C(G)}$. 
It thus remains to show that $\mu$ is a generically free 
representation of $G$. 

Let $K$ be the kernel of the $G_{C(F)}$ action on $\Lie(T)$. We claim $T$ is central in $K$. The finite $p$-group $K/T$ acts 
on $T$ (by conjugation), and it fixes the identity. 
By construction $K/T$ acts trivially on the tangent space 
at the identity, which implies $K/T$ acts trivially on $T$, 
since the characteristic is $\neq p$; cf.~\cite[Proof of 4.1]{GR}. 
This proves the claim. 

By Lemma~\ref{lem.gap}, the $K$-action on $V$ is generically free. 
Now $G_{C(F)}$ acts trivially on $\Lie(T^{C(F)})$, so $G_{C(F)}/K$ acts faithfully on $W$. Since $G_{C(F)}/K$ is finite, this action is also generically free. Therefore $G_{C(F)}$ acts generically freely on $V \oplus W$  \cite[Lemma 3.2]{mr1}. Finally, by Proposition~\ref{prop.tame}(a), $G$ acts generically freely on $V \oplus W$, as desired.

\smallskip
(b) By our assumption
$\ker \rho  \subset T$. Set $\overline{T} \colonequals T/\ker\rho$. It is easy to see that $\dim \, T^{C(F)} \le 
\dim \, \overline{T}^{C(F)}$. Hence, by part (a), there exists
a generically free representation of $G /\ker\rho$ of  
dimension 
\[ \dim \overline{T} - \dim \, \overline{T}^{C(F)} \le
\dim T - \dim \, T^{C(F)} \, . \]
We may now view this representation as a $p$-generically 
free representation of $G$. This completes the proof of Theorem~\ref{thm.tame}.
\end{proof}

\begin{rem}
A similar argument shows that for any tame normal subgroup $H \subset G$ 
over a $p$-special field $k$, $\gap(G;p) \leq \ed(G/H ;p)$.
\end{rem}

\section{Additivity}
\label{sect.additive}

Our proof of the Additivity Theorem~\ref{thm.additive} relies on the following lemma. Let $G$ be an algebraic group defined over $k$ and $C$ be a
$k$-subgroup of $G$. Denote the minimal dimension of a
representation $\rho$ of $G$ such that $\rho_{|\, C}$ 
is faithful by $f(G, C)$. 

\begin{lem} \label{lem.additive}
For $i = 1, 2$ let $G_i$ be an algebraic group defined over a field $k$ and
$C_i$ be a central $k$-subgroup of $G_i$. Assume that $C_i$ is
isomorphic to $\mu_p^{r_i}$ over $k$ for some $r_1, r_2 \ge 0$. Then
\[ f(G_1 \times G_2; C_1 \times C_2) = f(G_1; C_1) + f(G_2; C_2) \,
. \]
\end{lem}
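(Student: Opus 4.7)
The plan is to establish the two inequalities separately. The upper bound $f(G_1\times G_2;C_1\times C_2)\le f(G_1;C_1)+f(G_2;C_2)$ is routine: given representations $\rho_i\colon G_i\to\GL(V_i)$ of minimal dimension $f(G_i;C_i)$ with $\rho_i|_{C_i}$ faithful, pull them back along the projections $G_1\times G_2\to G_i$ and form their direct sum. The result has dimension $f(G_1;C_1)+f(G_2;C_2)$, and its restriction to $C_1\times C_2$ is faithful since its restriction to each $C_i$ already is.

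For the reverse inequality, let $\rho\colon G_1\times G_2\to\GL(V)$ be a representation with $\rho|_C$ faithful, where $C\colonequals C_1\times C_2\simeq\mu_p^{r_1+r_2}$. I would first decompose $V=\bigoplus_{\chi\in X(C)}V_\chi$ into $C$-isotypic components; because $C$ is central and split as a power of $\mu_p$ over $k$, this decomposition is defined over $k$ and each $V_\chi$ is a $G_1\times G_2$-invariant subspace. Identifying $X(C)=X(C_1)\oplus X(C_2)$ as an $\FF_p$-vector space of dimension $r_1+r_2$ and writing $S\colonequals\{\chi\in X(C)\mid V_\chi\ne 0\}$, the hypothesis that $\rho|_C$ is faithful translates to the condition that $S$ spans $X(C)$ over $\FF_p$.

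The key step is to produce \emph{disjoint} subsets $S_1,S_2\subseteq S$ such that each $q_i(S_i)$ spans $X(C_i)$, where $q_i\colon X(C)\to X(C_i)$ denotes projection. I would deduce the existence of such a splitting from Edmonds' matroid union theorem applied to the two linear matroids on $S$ induced by $q_1$ and $q_2$. This reduces to verifying the rank inequality
\[
\rk(q_1(A))+\rk(q_2(A))+|S\setminus A|\ \ge\ r_1+r_2\qquad\text{for every }A\subseteq S,
\]
which itself follows from two elementary facts: the $\FF_p$-span of $A$ inside $X(C)$ has dimension at most $\rk(q_1(A))+\rk(q_2(A))$ (since the kernel of $q_1$ restricted to $\Span(A)$ injects into $\Span(q_2(A))$), and $\Span(A)+\Span(S\setminus A)=X(C)$ since $S$ spans $X(C)$.

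Finally, setting $V^{(i)}\colonequals\bigoplus_{\chi\in S_i}V_\chi$, disjointness of $S_1$ and $S_2$ yields $\dim V\ge\dim V^{(1)}+\dim V^{(2)}$. Viewed as a $G_i$-representation via the inclusion $G_i\hookrightarrow G_1\times G_2$, the space $V^{(i)}$ has $C_i$-isotypic support exactly $q_i(S_i)$, which spans $X(C_i)$; hence $V^{(i)}|_{C_i}$ is faithful and $\dim V^{(i)}\ge f(G_i;C_i)$. Summing completes the proof. The main obstacle is conceiving of the matroid decomposition step; once phrased in matroid terms the rank inequality is elementary. A preliminary technical point is that the $C$-isotypic decomposition must be defined over $k$, which is precisely what the hypothesis that $C_i$ is split as $\mu_p^{r_i}$ over $k$ guarantees.
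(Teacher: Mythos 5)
Your proof is correct, and the lower-bound half takes a genuinely different route from the paper's. The paper starts from a representation $\rho$ of \emph{minimal} dimension with $\rho_{|\,C_1\times C_2}$ faithful, observes that the central characters $\chi_1,\dots,\chi_n$ of its irreducible constituents must then form a \emph{basis} of $(C_1\times C_2)^*$ over $\FF_p$, and performs a greedy exchange: each constituent $\rho_j$ is replaced by $\rho_j\circ\epsilon_1\circ\pi_1$ or $\rho_j\circ\epsilon_2\circ\pi_2$ (using $\chi_j=\chi_j\circ\epsilon_1\circ\pi_1+\chi_j\circ\epsilon_2\circ\pi_2$ and the basis property to see that one of the two choices preserves faithfulness on $C_1\times C_2$), ending with a representation of the same dimension of the form $\alpha_1\circ\pi_1\oplus\alpha_2\circ\pi_2$. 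You instead work with an \emph{arbitrary} representation faithful on $C$, never invoke minimality, and do not modify the representation at all: you partition (part of) the $C$-isotypic support into disjoint sets $S_1,S_2$ with $q_i(S_i)$ spanning $X(C_i)$, via Edmonds' matroid union theorem, and then carve out disjoint subrepresentations $V^{(1)},V^{(2)}$. Your verification of the rank inequality $\rk(q_1(A))+\rk(q_2(A))+|S\setminus A|\ge r_1+r_2$ is sound (the kernel of $q_1$ on $\Span(A)$ injects into $X(C_2)$, indeed into $\Span(q_2(A))$), and the isotypic components are $G_1\times G_2$-invariant and defined over $k$ exactly as you say. What each approach buys: the paper's argument is elementary and self-contained, needing only the linear algebra of a basis of $\FF_p^{r_1+r_2}$; yours is conceptually cleaner in that it proves the a priori stronger statement that \emph{every} representation faithful on $C_1\times C_2$ contains disjoint subrepresentations witnessing $f(G_1;C_1)$ and $f(G_2;C_2)$, at the cost of importing a nontrivial combinatorial theorem. (For what it is worth, in this two-matroid situation one could also replace the appeal to matroid union by a direct greedy argument essentially equivalent to the paper's exchange step.)
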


Our argument below is a variant of the proof of~\cite[Theorem 5.1]{KM},
where $G$ is assumed to be a (constant) finite $p$-group and $C = C(G)$
(recall that $C(G)$ is defined at the beginning of 
Section~\ref{sect.proof-main-thm}).

\begin{proof} For $i = 1, 2$ let
$\pi_i \colon G_1 \times G_2 \to G_i$ be the natural projection and
$\epsilon_i \colon G_i \to G_1 \times G_2$ be the natural inclusion.

If $\rho_i$ is a $d_i$-dimensional representation of $G_i$ whose
restriction to $C_i$ is faithful, then clearly $\rho_1 \circ \pi_1
\oplus \rho_2 \circ \pi_2$ is a $d_1 + d_2$-dimensional
representation of $G_1 \times G_2$ whose restriction to $C_1 \times
C_2$ is faithful.  This shows that
\[ f(G_1 \times G_1; C_1 \times C_2) \le f(G_1; C_1) + f(G_2; C_2) \, . \]
To prove the opposite inequality, let $\rho \colon G_1 \times G_2
\to \GL(V)$ be a representation such that $\rho_{| \, C_1 \times
C_2}$ is faithful, and of minimal dimension
\[ d = f(G_1 \times G_1; C_1 \times C_2) \]
with this property. Let $\rho_1,\rho_2,\dotsc,\rho_n$ denote the
irreducible decomposition factors in a decomposition series of
$\rho$. Since $C_1 \times C_2$ is central in $G_1 \times G_2$, each
$\rho_i$ restricts to a multiplicative character of $C_1 \times C_2$
which we will denote by $\chi_i$. Moreover since $C_1 \times
C_2\simeq \mu_p^{r_1+r_2}$ is linearly reductive $\rho_{| \, C_1
\times C_2}$ is a direct sum $\chi_1^{\oplus d_1} \oplus \dotsb
\oplus \chi_n^{\oplus d_n}$ where $d_i=\dim V_i$. It is easy to see
that the following conditions are equivalent:

\smallskip
(i) $\rho_{| \, C_1 \times C_2}$ is faithful,

\smallskip
(ii) $\chi_1, \dots, \chi_n$ generate $(C_1 \times C_2)^*$ as an
abelian group.

\smallskip
\noindent In particular we may assume that $\rho=\rho_1\oplus \dotsb
\oplus \rho_n$. Since $C_i$ is isomorphic to $\mu_p^{r_i}$, we will
think of $(C_1 \times C_2)^*$ as a $\FF_p$-vector space of dimension
$r_1$ + $r_2$. Since (i) $\Leftrightarrow$ (ii) above, we know that
$\chi_1, \dots, \chi_n$ span $(C_1 \times C_2)^*$. In fact, they
form a basis of $(C_1 \times C_2)^*$, i.e., $n = r_1 + r_2$. Indeed,
if they were not linearly independent we would be able to drop some
of the terms in the irreducible decomposition $\rho_1 \oplus \dots
\oplus \rho_n$, so that the restriction of the resulting
representation to $C_1 \times C_2$ would still be faithful,
contradicting the minimality of $\dim \rho$.

We claim that it is always possible to replace each $\rho_j$ by
$\rho_j'$, where $\rho_j'$ is either $\rho_j \circ \epsilon_1 \circ
\pi_1$ or $\rho_j \circ \epsilon_2 \circ \pi_2$ such that the
restriction of the resulting representation $\rho' = \rho_1' \oplus
\dots \oplus \rho_n'$ to $C_1 \times C_2$ remains faithful.  Since
$\dim \rho_i = \dim \rho_i'$, we see that $\dim \rho' =
\dim \rho$. Moreover, $\rho'$ will then be of the form $\alpha_1
\circ \pi_1 \oplus \alpha_2 \circ \pi_2$, where $\alpha_i$ is a
representation of $G_i$ whose restriction to $C_i$ is faithful.
Thus, if we can prove the above claim, we will have
\begin{align*}
f(G_1 \times G_1; C_1 \times C_2) & = \dim \rho = \dim \rho' =
\dim \alpha_1 + \dim\alpha_2  \\
 &\ge f(G_1, C_1) + f(G_2, C_2) \, ,
\end{align*}
as desired.

To prove the claim, we will define $\rho_j'$ recursively for $j = 1,
\dots, n$. Suppose $\rho_1', \dots, \rho_{j-1}'$ have already be
defined, so that the restriction of
\[ \rho_1' \oplus \dots \oplus \rho_{j-1}' \oplus \rho_j \dots \oplus \rho_n \]
to $C_1 \times C_2$ is faithful. For notational simplicity, we will
assume that $\rho_1 = \rho_1', \dots, \rho_{j-1} = \rho_{j-1}'$.
Note that
\[ \chi_j = (\chi_j \circ \epsilon_1 \circ \pi_1) +
(\chi_j \circ \epsilon_2 \circ \pi_2) \, . \] Since $\chi_1, \dots,
\chi_n$ form a basis $(C_1 \times C_2)^*$ as an $\FF_p$-vector
space, we see that (a) $\chi_j \circ \epsilon_1 \circ \pi_1$
or (b) $\chi_j \circ \epsilon_2 \circ \pi_2$  does not lie in
$\Span_{\FF_p}(\chi_1, \dots, \chi_{j-1}, \chi_{j+1}, \dots,
\chi_n)$. Set
\[ \rho_j' \colonequals \begin{cases}
\text{$\rho_j \circ \epsilon_1 \circ \pi_1$ in case (a), and} \\
\text{$ \rho_j \circ \epsilon_2 \circ \pi_2$, otherwise.}
\end{cases} \]
Using the equivalence of (i) and (ii) above, we see
that the restriction of
\[ \rho_1 \oplus \dots \oplus \rho_{j-1} \oplus
\rho_j' \oplus \rho_{j+1}, \dots \oplus \rho_n \] to $C$ is
faithful. This completes the proof of the claim and thus of
Lemma~\ref{lem.additive}.
\end{proof}

\begin{proof}[Proof of Theorem~\ref{thm.additive}]
Clearly $G_1\times G_2$ is again a $p$-group extended by a torus.

Reall that $C(G)$ is defined as
the maximal split $p$-torsion subgroup of the center of $G$; 
see Section~\ref{sect.proof-main-thm}.
It follows from this definition that 
\[ C(G_1 \times G_2) = C(G_1) \times C(G_2) \, ; \]
cf.~\cite[Lemma~2.1]{LMMR}.  Lemma~\ref{lem.additive} and 
Proposition~\ref{prop.main2} show that the minimal dimension of a $p$-faithful
representation is $f(G,C(G))=f(G_1,C(G_1))+f(G_2,C(G_2))$, which is
the sum of the minimal dimensions of $p$-faithful representations of
$G_1$ and $G_2$. For $i\in\{1,2\}$, since $\gap(G_i;p)=0$, there
exists a $p$-generically free representation $\rho_i$ of $G_i$ of
dimension $f(G_i,C(G_i))$. The direct sum $\rho_1\oplus \rho_2$ is a
$p$-generically free representation of $G$ and its dimension is
$f(G,C(G))$. It follows that $\gap(G_1\times G_2;p)=0$.
By Theorem~\ref{thm.MainTheorem}
\[ \ed(G; p) = f(G, C(G)) - \dim G \, ; \]
cf. Proposition~\ref{prop.main2}, and similarly for $G_1$ and $G_2$. Hence
the equality $\ed(G;p)=\ed(G_1;p)+\ed(G_2;p)$ follows. This concludes
the proof.
\end{proof}


\begin{exa}
Let $T$ be a torus over a field $k$.  Suppose there exists
an element $\tau$ in the absolute Galois group $\Gal(\sep{k}/k)$
which acts on the character lattice $X(T)$ via multiplication by
$-1$.  Then $\ed(T; 2) \ge \dim T$.
\end{exa}
\begin{proof}
Let $n\colonequals \dim T$. Over the fixed field $K\colonequals
(\sep{k})^\tau$ the torus $T$ becomes isomorphic to a direct product
of $n$ copies of a non-split one-dimensional torus $T_1$. Using
\cite[Theorem 1.1]{LMMR} it is easy to see that $\ed(T_1;2)=1$. By
the Additivity Theorem~\ref{thm.additive} we conclude that 
$\ed(T;2)\geq \ed(T_K;2)=\ed((T_1)^{n};2)=n\ed(T_1;2) = \dim T$.
\end{proof}

%

We conclude this section with an example
which shows, in particular, that the property $\gap(G;p)=0$ 
is not preserved under base field extensions.
Let $T$ be an algebraic torus over a field $p$-special field $k$ of characteristic 
 $\neq p$ and $F$ a non-trivial $p$-subgroup of the constant 
 group $S_n$. Then we can form the wreath product
 $$T \wr F \colonequals T^n \rtimes F,$$
 where $F$ acts on $T^n$ by permutations. 

\begin{exa} \label{ex.wreath}
$\gap(T\wr F;p)=0$ if and only if $\ed(T;p)>0$.  Moreover,
\[ \ed(T\wr F;p) =
\begin{cases}
\ed(T^n;p)=n\ed(T;p), & \text{if } \ed(T;p)>0, \\
\ed(F;p), & \text{otherwise.} 
\end{cases} \]
\end{exa}

\begin{proof} 
Let $W$ be a $p$-faithful $T$-representation 
of minimal dimension. By~\cite[Theorems 1.1]{LMMR},
$\ed(T; p) = \dim W - \dim T$.

Then $W^{\oplus n}$ is naturally a $p$-faithful $T\wr F$-representation. 
Lemma \ref{lem.additive} and Proposition~\ref{prop.main2} 
applied to $T^n$ tell us that $W^{\oplus n}$ has minimal dimension 
among all $p$-faithful representations of $T \wr F$. 

Suppose $\ed(T;p)>0$, i.e.,  $\dim W>\dim T$. 
The group $F$ acts faithfully on the rational quotient 
$W^{\oplus n}/T^n=(W/T)^n$, since $\dim W/T = \dim W-\dim T>0$. 
It is easy to see that
the $T\wr F$-action on $W^{\oplus n}$ 
is $p$-generically free; cf. e.g,~\cite[Lemma 3.3]{mr1}.
In particular, $\gap(T\wr F;p)=0$ and 
\[ \ed(T\wr F;p)=\dim W^{\oplus n}-\dim (T\wr F) = n(\dim W - \dim T) 
= n\ed(T;p)=\ed(T^n;p) \, , \] 
where the last equality follows from Theorem~\ref{thm.additive}. 

Now assume that $\ed(T;p)=0$, i.e., $\dim W=\dim T$.  The group $T\wr F$ 
cannot have a $p$-generically free representation $V$ of dimension 
$\dim W^{\oplus n}=\dim T \wr F$, since $T^n$ would then have a dense 
orbit in $V$.  It follows that $\gap(T\wr F;p)>0$. In order to compute its essential 
$p$-dimension of $T\wr F$ we use the fact that 
the natural projection $T\wr F \to F$ has a section. 
Hence, the map $H^1(-,T\wr F)\to H^1(-,F)$ also has a section 
and is, consequently, a surjection. This implies 
that $\ed(T\wr F;p)\geq \ed(F;p)$. 
Let $W'$ be a faithful $F$-representation of dimension 
$\ed(F;p)$. The direct sum $W^{\oplus n}\oplus W'$ considered 
as a $T\wr F$ representation is $p$-generically free. 
So, $\ed(T\wr F;p)= \ed(F;p)$. 
\end{proof}

\section{{Groups of low essential \texorpdfstring{$p$}{p}-dimension}}
\label{sect.edsmall}

In \cite{LMMR} we have identified tori of essential dimension $0$ as
those tori, whose character lattice is invertible, i.e. a direct
summand of a permutation module, see \cite[Example 5.4]{LMMR}. 
The following lemma shows that among the algebraic groups $G$ 
studied in this paper, i.e. extensions of $p$-groups by tori, 
there are no other examples of groups of $\ed(G;p)=0$.
\begin{lem} \label{lem.ed=0}
Let $G$ be an algebraic group over a field $k$ such that $G/G^0$ is a $p$-group. If $\ed(G;p)=0$, then $G$ is connected.
\end{lem}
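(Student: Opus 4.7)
The plan is to suppose $F \colonequals G/G^0$ is nontrivial and derive a contradiction by showing that $\ed(F;p) \le \ed(G;p) = 0$, in conflict with the fact that every nontrivial (possibly twisted) finite $p$-group over a $p$-special field has essential $p$-dimension at least $1$. To begin, I would replace $k$ by its $p$-special closure using Lemma~\ref{lem.edPrimeToPClosure}, and set $T \colonequals G^0$. The lower bound of Theorem~\ref{thm.MainTheorem}, combined with the inequality $\dim\rho \ge \dim G$ (valid for every $p$-faithful $\rho$, whose kernel is finite), forces the minimum to be attained at $\dim G$: there is a $p$-faithful representation $\rho\colon G \to \GL(V)$ with $\dim V = \dim G = \dim T$. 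Setting $G' \colonequals G/\Ker\rho$ and $T' \colonequals T/\Ker\rho$, one has $\ed(G';p) = 0$ by Proposition~\ref{prop.edQuotient}, and $G'/T' \cong F$.

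Since $T'$ embeds into $\GL(V)$ as a subtorus of dimension equal to $\dim V$, it is a maximal $k$-torus of $\GL(V)$, and $G' \subseteq N \colonequals N_{\GL(V)}(T')$. The key structural claim is that the extension $1 \to T' \to N \to W \to 1$, where $W$ is the Weyl group, splits over $k$. Indeed, writing $T' = \R_{L/k}\Gm$ for the étale $k$-algebra $L$ corresponding to $T'$, the subgroup $\Aut_{k\text{-alg}}(L) \subseteq \GL_k(L) = \GL(V)$ normalizes $T'$ and maps isomorphically onto $W$, yielding a $k$-rational section $W \hookrightarrow N$. Since $G'$ is the preimage in $N$ of the subgroup $F \subseteq W$, restriction of this section to $F$ gives a $k$-rational section of $G' \twoheadrightarrow F$, so that $G' = T' \rtimes F$ as a $k$-group.

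Finally, the retraction $G' \twoheadrightarrow F$ makes $H^1(-, G') \to H^1(-, F)$ split surjective, so \cite[Proposition~1.5]{Me1} gives $\ed(F;p) \le \ed(G';p) = 0$. On the other hand, $F$ is nontrivial, every faithful representation of $F$ has positive dimension, and \cite[Theorem~7.1]{LMMR} then yields $\ed(F;p) \ge 1$; this contradiction forces $F = 1$, so $G$ is connected. The main point to verify carefully is the $k$-rational splitting of $N$, which I expect to reduce cleanly to the étale-algebra description of maximal tori in $\GL(V)$.
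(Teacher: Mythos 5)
Your route is genuinely different from the paper's, and in the case it covers it can be made to work, but as written it does not prove the stated lemma. The lemma assumes only that $G/G^0$ is a $p$-group; it does \emph{not} assume $G^0$ is a torus. Your very first substantive step invokes the lower bound of Theorem~\ref{thm.MainTheorem} (and the inequality $\dim\rho\ge\dim G$ for faithful representations, which itself is special to groups whose identity component is diagonalizable -- think of $\SL_2\subset\GL_2$), so your argument only applies when $G^0$ is a torus. The paper's proof is entirely different and works for arbitrary $G$: it takes a generic $G$-torsor $X$ (pulled back from $\GL_n\to\GL_n/G$), observes that $X/G^0$ is a nonsplit $F$-torsor which stays nonsplit over all prime-to-$p$ extensions because $F$ is a $p$-group, concludes that $p$ is a torsion prime of $G$, and then quotes \cite[Proposition 4.4]{Me1} to get $\ed(G;p)>0$. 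That argument is both shorter and independent of the main theorem.

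Within the torus case there is also an imprecision you would need to repair. The abstract group $\Aut_{k\text{-alg}}(L)$ of $k$-points does \emph{not} map isomorphically onto the Weyl group $W$ in general: for $L$ a non-normal separable field extension of degree $n$ it can be trivial, while $W$ is a twisted form of $S_n$. What is true, and what you need, is that the automorphism group \emph{scheme} $\underline{\Aut}_{k\text{-alg}}(L)$ (with $R$-points $\Aut_{R\text{-alg}}(L\otimes_k R)$) splits the normalizer: every $g\in N(R)$ normalizes the \'etale subalgebra $L\otimes R\subset\End_R(V\otimes R)$ and factors uniquely as an element of $(L\otimes R)^\times$ times an algebra automorphism acting $R$-linearly on $V\otimes R=L\otimes R$, giving $N\cong T'\rtimes\underline{\Aut}_{k\text{-alg}}(L)$ with $\underline{\Aut}_{k\text{-alg}}(L)\cong W$ as group schemes. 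With that correction, restricting the section to $F\subseteq W$ does give $G'=T'\rtimes F$, and your concluding retraction argument ($\ed(F;p)\le\ed(G';p)=0$ versus $\ed(F;p)\ge 1$ for nontrivial $F$) is sound. So: fix the group-scheme formulation of the splitting, and either add the hypothesis that $G^0$ is a torus or adopt the paper's torsion-prime argument to get the general statement.
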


\begin{proof}
Assume the contrary: $F \colonequals G/G^0 \ne \{ 1 \}$.
Let $X$ be an irreducible $G$-torsor over some field $K/k$.
For example, we can construct $X$ as follows. Start with
a faithful linear representation $G \hookrightarrow \GL_n$ for some $n \ge 0$.
The natural projection $\GL_n \to \GL_n/G$ is a $G$-torsor. Pulling back
to the generic point $\Spec(K) \to \GL_n/G$, we obtain an irreducible 
$G$-torsor over $K$.

Now $X/G^0 \to \Spec(K)$ is an irreducible $F$-torsor.  
Since $F\neq \{ 1 \}$ is not connected, this torsor is non-split.
As $F$ is a $p$-group, $X/G^0$ remains non-split over every prime to $p$ extension $L/K$. It follows that the degree of every closed point of $X$ is divisible by $p$, hence $p$ is a torsion prime of $G$. Therefore $\ed(G;p)>0$ by \cite[Proposition 4.4]{Me1}.
This contradicts the assumption $\ed(G;p)=0$ and so $F$ 
must be trivial.
\end{proof}

For the remainder of this section
we will assume the base field $k$ is of characteristic $\neq p$.
 
\begin{prop} \label{prop.ed<=p-2}
Let $G$ be a central extension of a $p$-group $F$ by a torus $T$. If $\ed(G;p) \leq p-2$, then $G$ is of multiplicative type.
\end{prop}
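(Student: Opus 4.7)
The plan is to prove the contrapositive: if $G$ is not commutative (equivalently, not of multiplicative type, since a smooth commutative group whose identity component is a torus and whose component group has order prime to $\Char k$ has no unipotent part), then $\ed(G;p) \geq p-1$. The key observation is that $G$ is tame in the sense of Definition~\ref{def.tame}: because $T \subset Z(G)$, the conjugation action of $F$ on $T$ is trivial, so \emph{a fortiori} $C(F)$ acts trivially on $T$. Hence Theorem~\ref{thm.tame}(b) gives
\[
\ed(G;p) = \min_{\rho} \dim \rho - \dim T,
\]
where $\rho$ ranges over $p$-faithful representations of $G$.

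Fix a $p$-faithful $\rho \colon G \to \GL(V)$ attaining this minimum. After base change to $\bar k$ (which preserves dimensions and $p$-faithfulness), decompose $V = V_1 \oplus \cdots \oplus V_n$ into absolutely irreducibles. By Proposition~\ref{prop3.2} (whose proof applies verbatim over $\bar k$), each $\dim V_i = p^{e_i}$. Since $T$ is central in $G_{\bar k}$ and $V_i$ is absolutely irreducible, Schur's lemma forces $T$ to act on $V_i$ via a single character $\chi_i \in X(T_{\bar k})$. By Proposition~\ref{prop.main2}, $\ker\rho \subset T$ has order prime to $p$, so $\rho|_T$ has prime-to-$p$ kernel; this forces the characters $\chi_1,\dots,\chi_n$ to span a sublattice of $X(T_{\bar k})$ of full rank $\dim T$ (and prime-to-$p$ index). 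In particular $n$ is at least the number of distinct $\chi_i$, which is at least $\dim T$.

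The crucial step is a dichotomy on summand dimensions. Partition the indices into $I = \{i : \dim V_i = 1\}$ and $J = \{j : \dim V_j \geq p\}$. If $J = \emptyset$, every $V_i$ is one-dimensional, so $\rho$ factors through $G^{\mathrm{ab}}$ and $[G,G] \subseteq \ker \rho$. But $[G,G]$ is a finite $p$-group: it surjects onto $[F,F]$ (a finite $p$-group) with kernel $[G,G]\cap T$, which is the image of the bimultiplicative commutator pairing $F \times F \to T$ and hence a finite $p$-subgroup of $T$. Since $|\ker\rho|$ is prime to $p$, this forces $[G,G]=1$, contradicting non-commutativity. Hence $J \neq \emptyset$, and
\[
\dim V \;=\; |I| + \sum_{j \in J} p^{e_j} \;\ge\; |I| + p|J| \;=\; n + (p-1)|J| \;\ge\; \dim T + (p-1),
\]
yielding $\ed(G;p) \ge p-1$, the desired contradiction.

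The main obstacle is ensuring that non-commutativity of $G$ really forces some higher-dimensional irreducible summand in \emph{every} $p$-faithful representation; this is exactly what the third paragraph accomplishes, with the crucial input that $[G,G]$ is a finite $p$-group and hence cannot be swallowed by the prime-to-$p$ kernel of a $p$-faithful $\rho$. A secondary technical matter is verifying that absolutely irreducible representations of $G_{\bar k}$ have $p$-power dimension and that $T$ acts on each through a single character; both follow from the centrality of $T$ in $G$ combined with the Zariski-density argument of Proposition~\ref{prop3.2}.
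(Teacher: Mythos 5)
Your proposal is correct in substance, and its core counting is the same as the paper's, just run as a contrapositive: decompose a $p$-faithful representation over $\bar k$ into irreducibles (this uses linear reductivity of $G$, i.e.\ Nagata's theorem, which you should cite as the paper does), observe that the central torus acts on each summand by a character so that there are at least $\dim T$ summands, and invoke Proposition~\ref{prop3.2} to conclude that any summand of dimension $>1$ has dimension at least $p$, which forces $\dim V \ge \dim T + (p-1)$. Note that you do not actually need tameness or Theorem~\ref{thm.tame}(b) here; the lower bound of Theorem~\ref{thm.MainTheorem} already gives $\ed(G;p)\ge \min\dim\rho-\dim T$, which is all your argument uses.

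Where you genuinely diverge is in handling the prime-to-$p$ kernel. The paper first treats the faithful case (where all summands one-dimensional immediately gives diagonalizability) and then reduces the $p$-faithful case to it by embedding $G$ into $T_1\times T_2$, with $T_1\supseteq G/\ker\rho$ and $T_2=G/F'$ for the finite $p$-subgroup $F'$ of Lemma~\ref{lem.schneider}. You instead argue that if every summand is one-dimensional then $[G,G]\subseteq\ker\rho$, and since $[G,G]$ is a finite $p$-group it must be trivial. This is a legitimate and arguably more direct route, but your justification of the key fact that $[G,G]$ is a finite $p$-group is imprecise: the commutator map $F\times F\to G$ lands in $T$ only on pairs commuting in $F$, and $[G,G]\cap T$ consists of \emph{products} of commutators rather than single commutator values, so it is not literally ``the image of a bimultiplicative pairing $F\times F\to T$.'' The clean repair uses the paper's own Lemma~\ref{lem.schneider}: pick a finite $p$-subgroup $F'\subseteq G$ with $\pi(F')=F$; since $T$ is central, $G=T\cdot F'$ and hence $[G,G]=[F',F']$ is a finite $p$-group. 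With that fix (and your parenthetical observation that commutative implies multiplicative type in this setting, which is correct and is the same argument the paper applies to $Z(G)$ in Section~\ref{sect.proof-main-thm}), the proof is complete.
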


\begin{proof}
Without loss of generality, assume $k=\alg{k}$. 
By Theorem \ref{thm.MainTheorem}, there is a $p$-faithful representation 
$V$ of $G$, with $\dim V \leq \dim T + p-2$. 

First consider the case when $V$ is faithful. By Nagata's 
theorem~\cite{nagata} $G$ is linearly reductive, hence we can 
write $V=\bigoplus_{i=1}^r V_i$ for some non-trivial 
irreducible $G$-representations $V_i$.  Since $T$ is central 
and diagonalizable it acts by a fixed character on $V_i$, for every $i$. 
Hence $r \geq \dim T$ by faithfulness of $V$. 
It follows that $1\leq \dim V_i \leq p-1$ for each $i$. 
But every irreducible $G$-representation has dimension a power of $p$ 
(Proposition~\ref{prop3.2}), so each $V_i$ is one-dimensional. In other words, $G$ is of multiplicative type.


Now consider the general case, where $V$ is only $p$-faithful, and let $K \subset G$ be the kernel of that representation. 
Then $G/K$ is of multiplicative type, so it embeds into a torus $T_1$. 
Since $T$ is central in $G$, a subgroup $F'$ as in Lemma \ref{lem.schneider} is normal, so let $T_2=G/F'$, which is also a torus. 
The kernel of the natural map $G \to T_1 \times T_2$ is contained in $K \cap F'$, which is trivial since $p$ does not divide the order of $K$. In other words, $G$ embeds into a torus, and hence is of multiplicative type.
\end{proof}

\begin{exa}
Proposition~\ref{prop.ed<=p-2} does not generalize to tame groups. For 
a counter-example, assume the $p$-special field $k$ contains a primitive $p^2$-root of unity, 
and consider the group $G=\Gm^p \rtimes \ZZ/p^2$, where a generator 
in $\ZZ/p^2$ acts by cyclically permuting the $p$ copies of $\Gm$. 
The group $G$ is tame, because $C(\ZZ/p^2)= \ZZ/p = \mu_p$ acts trivially 
on $\Gm^p$. On the other hand, $G$ is not abelian and hence, is 
not of multiplicative type. 

We claim that $\ed(G;p) = 1$ and hence, $\le p-2$ for every odd prime $p$.
There is a natural $p$-dimensional faithful representation $\rho$
of $G$; $\rho$ embeds $\Gm^p$ into $\GL_p$ diagonally, 
in the standard basis $e_1, \dots, e_p$,
and $\ZZ/p^2$ cyclically permutes $e_1, \dots, e_p$.
Taking the direct sum of $\rho$ 
with the $1$-dimensional representation 
$\chi \colon G \to  \ZZ/p^2 = \mu_{p^2} 
\hookrightarrow \Gm = \GL_1$, we obtain a faithful $p+1$-dimensional representation
$\rho \oplus \chi$ which is therefore generically free 
by Corollary~\ref{cor.tame} (this can also be verified directly). 
Hence, $\ed(G;p) \le (p+1) - \dim(G) = 1$. On the other hand, by
Lemma~\ref{lem.ed=0}, we see that $\ed(G;p) \ge 1$ and thus $= 1$, as claimed. 
 \qed 
\end{exa}

Let $\Gamma_p$ a finite $p$-group, and let $\phi: P \to X$ be 
a map of $\ZZ \Gamma_p$-modules. As in \cite{LMMR}, we will 
call $\phi$ a {\it $p$-presentation} if $P$ is permutation, 
and the cokernel is finite of order prime to $p$. We will denote 
by $I$ the augmentation ideal of $\ZZ[\Gamma_p]$, and 
by $\overline{X} \colonequals X/(pX+IX)$ the largest $p$-torsion 
quotient with trivial $\Gamma_p$-action. The induced map on quotient 
modules will be denoted $\bar{\phi} : \overline{P} \to \overline{X}$. 

In the sequel for $G$ a group of multiplicative type over $k$, 
the group $\Gamma_p$ in the definition of ``$p$-presentation'' is 
understood to be a Sylow $p$-subgroup of $\Gamma=\Gal(\ell/k)$, 
where $\ell/k$ is a Galois splitting field of $G$.

\begin{lem} \label{lem.p-pres}
Let $\phi:P \to X$ be a map of $\ZZ \Gamma_p$-modules. Then the cokernel 
of $\phi$ is finite of order prime to $p$ if and only if $\bar{\phi}$ 
is surjective. 
\end{lem}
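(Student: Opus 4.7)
The plan is to study the cokernel $C \colonequals \op{coker}(\phi)$ directly. The functor $M \mapsto M/(pM+IM) = M \otimes_{\ZZ[\Gamma_p]} \FF_p$ (with trivial $\Gamma_p$-action on $\FF_p$) is right exact, so applying it to the presentation $P \xrightarrow{\phi} X \to C \to 0$ yields the exact sequence
\[
\overline{P} \xrightarrow{\bar\phi} \overline{X} \to \overline{C} \to 0.
\]
Consequently $\bar\phi$ is surjective if and only if $\overline{C}=0$, and the lemma reduces to showing that $C$ is finite of order prime to $p$ if and only if $\overline{C}=0$.

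The forward direction is immediate: if $|C|$ is finite and coprime to $p$, then multiplication by $p$ is bijective on $C$, hence $pC=C$, and $\overline{C}$, being a quotient of $C/pC$, vanishes. For the reverse direction, assume $C = pC + IC$. I would first reduce modulo $p$: the $\FF_p[\Gamma_p]$-module $C/pC$ then satisfies $C/pC = \bar I \cdot (C/pC)$, where $\bar I \subset \FF_p[\Gamma_p]$ denotes the augmentation ideal. The crucial input is that $\Gamma_p$ is a finite $p$-group, which makes $\FF_p[\Gamma_p]$ a local ring with nilpotent maximal ideal $\bar I$. Nakayama's lemma applied to the finitely generated $\FF_p[\Gamma_p]$-module $C/pC$ then forces $C/pC=0$, i.e.\ $C=pC$. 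Since $X$ is finitely generated over $\ZZ$ in the intended applications (character lattices of groups of multiplicative type), so is $C$; a finitely generated abelian group satisfying $pC=C$ must have trivial free part and must have $p$ acting invertibly on its torsion part, so it is finite of order coprime to $p$.

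The main obstacle is the reverse direction, and within it the legitimate invocation of Nakayama's lemma: this depends critically on the $p$-group hypothesis on $\Gamma_p$ (to guarantee nilpotence of $\bar I$, which fails for general finite groups) and on finite generation of $C$ over $\ZZ$ (to extract a finiteness conclusion from $pC=C$).
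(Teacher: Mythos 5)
Your proof is correct. Note first that the paper does not actually prove this lemma: it simply cites the proof of Theorem 4.3 in [Me$_2$] and Lemma 2.2 of [LMMR], so your argument is a self-contained reconstruction rather than a divergence from the paper. The structure is the standard one and all steps check out: right exactness of $M\mapsto M\otimes_{\ZZ[\Gamma_p]}\FF_p$ reduces the statement to $\overline{C}=0 \Leftrightarrow C$ finite of order prime to $p$; the forward direction is trivial; and the reverse direction uses that $\FF_p[\Gamma_p]$ is local with nilpotent augmentation ideal because $\Gamma_p$ is a finite $p$-group. Two small remarks. First, since $\bar I$ is nilpotent, the step $C/pC=\bar I\cdot(C/pC)\Rightarrow C/pC=0$ needs no finite generation at all (iterate to get $C/pC=\bar I^N(C/pC)=0$), so Nakayama in its finitely-generated form is more than you need there; finite generation of $C$ over $\ZZ$ enters only at the very end, to pass from $pC=C$ to finiteness of prime-to-$p$ order. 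Second, you are right that this last step genuinely requires $X$ (hence $C$) to be a finitely generated $\ZZ$-module --- the lemma as literally stated omits this hypothesis, but it is implicit throughout the section, where $X$ is a character lattice; flagging it is appropriate rather than a defect.
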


\begin{proof}
This is shown in \cite[proof of Theorem 4.3]{Me2}, and 
from a different perspective in \cite[Lemma 2.2]{LMMR}.
\end{proof}

\begin{prop} \label{prop.ed<=r<=p-2}
Let $G$ be a central extension of a $p$-group $F$ by a torus $T$, and let $0 \leq r \leq p-2$. The following statements are equivalent.
\begin{enumerate}[label=(\alph*), ref=(\alph*)]
\item $\ed(G;p) \leq r$ 
\item $G$ is of multiplicative type and there is a $p$-presentation $P \to X(G)$ whose kernel is isomorphic to the trivial $\ZZ \Gamma_p$-module $\ZZ^r$. 
\end{enumerate} 
\end{prop}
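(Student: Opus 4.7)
My plan is to first reduce to the case where $G$ is of multiplicative type, then translate essential $p$-dimension into $p$-presentation language via the correspondence between representations of $G$ and character-lattice maps, and finally control the $\ZZ\Gamma_p$-module structure of the kernel by a rigidity fact for lattices of rank $\leq p-2$.

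For the direction (a)$\,\Rightarrow\,$(b) I would start by invoking Proposition~\ref{prop.ed<=p-2}: the hypothesis $\ed(G;p)\leq r\leq p-2$ forces $G$ to be of multiplicative type, and in particular abelian, hence tame in the sense of Definition~\ref{def.tame}. Theorem~\ref{thm.tame}(b) then gives
\[
\ed(G;p)=\min(\dim\rho-\dim G)
\]
over all $p$-faithful representations $\rho$ of $G$. The character-lattice dictionary (as in \cite[Lemma~2.6]{LMMR}) identifies such an $n$-dimensional $\rho$ with a map $\phi\colon P\to X(G)$ of $\Gamma_p$-lattices, where $P$ is permutation of rank $n$, and the condition ``$p$-faithful'' becomes exactly ``$\phi$ is a $p$-presentation''. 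Since $\dim G=\rank X(G)$ and $\phi$ has finite cokernel, $\dim\rho-\dim G=\rank\ker\phi$, so the hypothesis translates into the existence of a $p$-presentation $\phi$ with $s\colonequals\rank\ker\phi\leq r$.

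The only non-routine step, and what I expect to be the main obstacle, is the following lattice lemma which I would isolate: for any finite $p$-group $\Gamma_p$ and any $\ZZ\Gamma_p$-lattice $L$ of rank $\leq p-2$, the $\Gamma_p$-action on $L$ is trivial. The proof is a short $\QQ$-minimal-polynomial argument: a faithful action of a nontrivial $p$-group on $L$ would yield an element of order $p$ in $\GL_{\rank L}(\ZZ)$ whose $\QQ$-minimal polynomial is divisible by the $p$-th cyclotomic polynomial $\Phi_p$, contradicting $\deg\Phi_p=p-1>\rank L$. Applied to $L=\ker\phi$, this forces $\ker\phi\cong\ZZ^s$ as a trivial $\Gamma_p$-module. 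To upgrade $s$ to $r$, I would enlarge $P$ by $r-s$ copies of the trivial lattice $\ZZ$, extending $\phi$ by zero on the new summands; the resulting map $\phi'\colon P'\to X(G)$ is still a $p$-presentation by Lemma~\ref{lem.p-pres} (the induced map on $\FF_p$-quotients is unchanged on the original factor and zero on the new one, so it remains surjective) and has kernel $\ZZ^r$ with trivial action, establishing (b).

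For the converse (b)$\,\Rightarrow\,$(a), the given $p$-presentation satisfies $\rank P=\rank X(G)+r=\dim G+r$, and by the dictionary corresponds to a $p$-faithful representation of $G$ of dimension $\dim G+r$. Since $G$ is of multiplicative type and therefore tame, Corollary~\ref{cor.tame}(b) promotes this to a $p$-generically free representation of the same dimension, and the upper bound of Theorem~\ref{thm.MainTheorem} (or equivalently Theorem~\ref{thm.tame}(b)) yields $\ed(G;p)\leq r$.
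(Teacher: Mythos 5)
Your argument is correct, and it reaches the conclusion by a more self-contained route than the paper, which disposes of the whole proposition in three citations: Proposition~\ref{prop.ed<=p-2} for multiplicative type, \cite[Corollary 5.1]{LMMR} (which already packages ``$\ed(G;p)=\min\rank\ker\phi$ over $p$-presentations $\phi\colon P\to X(G)$'' for groups of multiplicative type) for both implications, and the Sylow-type theorem of Abold--Plesken \cite[Satz]{AP} to conclude that $\Gamma_p$ acts trivially on a kernel of rank $\leq p-2$. Where you re-derive the content of \cite[Corollary 5.1]{LMMR} from Theorem~\ref{thm.MainTheorem}, Theorem~\ref{thm.tame}(b), Corollary~\ref{cor.tame}(b) and the lattice dictionary of \cite[Lemma 2.6]{LMMR}, the paper simply cites it; your version is longer but makes visible exactly which structural inputs are used, at the cost of having to say a word about base change --- Theorem~\ref{thm.tame}(b) is stated over a $p$-special field, and a $p$-presentation is only a map of $\ZZ\Gamma_p$-modules, so matching it with an actual $k$-representation requires first passing to the fixed field of $\Gamma_p$ (or to $\pcl{k}$) and invoking Lemma~\ref{lem.edPrimeToPClosure}; this is precisely the bookkeeping that the citation of \cite[Corollary 5.1]{LMMR} absorbs. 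Your cyclotomic-polynomial argument (an element of order $p$ in $\GL_n(\ZZ)$ has $\QQ$-minimal polynomial divisible by $\Phi_p$, forcing $n\geq p-1$) is a correct elementary replacement for the appeal to \cite[Satz]{AP}. Finally, your explicit padding of the kernel from rank $s=\ed(G;p)$ up to rank $r$ by adjoining $r-s$ trivial summands to $P$ is a detail the paper's proof leaves implicit (it only produces a kernel of rank $\ed(G;p)$, whereas statement (b) asks for $\ZZ^r$ on the nose), so on this point your write-up is actually the more careful one.
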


\begin{proof}
Assuming (a), by Proposition \ref{prop.ed<=p-2} $G$ is of multiplicative type. By \cite[Corollary 5.1]{LMMR} we know there is a $p$-presentation $P\to X(G)$, whose kernel $L$ is of free of rank $\ed(G;p)\leq p-2$. By \cite[Satz]{AP}, $\Gamma_p$ must act trivially on $L$. 

(b) $\Rightarrow$ (a) follows from \cite[Corollary 5.1]{LMMR}.
\end{proof}


\begin{prop}\label{prop.ed=r}
Assume $G$ is multiplicative type, with a $p$-presentation $\phi \colon P \to X(G)$ whose kernel is isomorphic to the trivial $\ZZ \Gamma_p$-module $\ZZ^r$ for some $r\geq 0$. Then $\ed(G;p)\leq r$ and the following conditions are equivalent.

\begin{enumerate}[label=(\alph*), ref=(\alph*)]
\item $\ed(G;p) = r$,
\item $\ker \phi$ is contained in $pP + IP$,
\item $\ker \phi$ is contained in 
$$\left\{\sum_{\lambda \in \Lambda} a_\lambda \lambda \in P \mid a_\lambda\equiv 0 \pmod p\  \forall \lambda \in \Lambda^{\Gamma_p} \right\}.$$
\end{enumerate}

Here $I$ denotes the augmentation ideal in $\ZZ [\Gamma_p]$ and $\Lambda$ is a $\Gamma_p$-invariant basis of $P$. 
\end{prop}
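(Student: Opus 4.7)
My plan is to obtain the upper bound $\ed(G;p)\le r$ from \cite[Corollary 5.1]{LMMR} applied to $\phi$ (as in the proof of Proposition~\ref{prop.ed<=r<=p-2}), and then establish the equivalence of (a), (b), and (c) by proving (b)$\,\Leftrightarrow\,$(c) through direct module-theoretic computation and (a)$\,\Leftrightarrow\,$(c) through a ``reduction of $p$-presentations'' construction.

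For (b)$\,\Leftrightarrow\,$(c), I will exploit that triviality of the $\Gamma_p$-action on $\ker\phi$ forces $\ker\phi\subseteq P^{\Gamma_p}$, so any $x=\sum_{\lambda\in\Lambda} a_\lambda\lambda\in\ker\phi$ has coefficients $a_\lambda$ constant along each $\Gamma_p$-orbit. Modulo $pP+IP$, all elements of a given orbit $O$ collapse to a single basis vector $[O]\in\bar P$, so $x$ reduces to $\sum_O (|O|\,a_{\lambda_O})[O]\pmod p$, where $\lambda_O$ is an orbit representative. Since a non-singleton orbit has size a positive power of $p$, its contribution vanishes mod $p$ automatically, whereas a singleton orbit $\{\lambda\}\subseteq\Lambda^{\Gamma_p}$ contributes $a_\lambda$. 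This directly yields (b)$\,\Leftrightarrow\,$(c).

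For the implication $\neg$(c)$\,\Rightarrow\,\ed(G;p)<r$ (hence $\neg$(a)), I will assume (c) fails and choose $x=\sum a_\lambda\lambda\in\ker\phi$ and $\lambda^*\in\Lambda^{\Gamma_p}$ with $a^*\colonequals a_{\lambda^*}$ coprime to $p$. Since $\lambda^*$ is $\Gamma_p$-fixed, $\Lambda'\colonequals\Lambda\setminus\{\lambda^*\}$ is $\Gamma_p$-invariant, so $P'\colonequals\Span(\Lambda')$ is a permutation submodule of rank $\rank(P)-1$. From $\phi(x)=0$ I obtain $a^*\phi(\lambda^*)=-\sum_{\lambda\in\Lambda'}a_\lambda\phi(\lambda)\in\phi(P')$, so $\phi(P)/\phi(P')$ is cyclic of order dividing $a^*$; combined with the fact that both $a^*$ and $|X(G)/\phi(P)|$ are coprime to $p$, the quotient $X(G)/\phi(P')$ is finite of order prime to $p$, and $\phi|_{P'}$ is a $p$-presentation with $\rank(\ker\phi|_{P'})=r-1$. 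Applying \cite[Corollary 5.1]{LMMR} to $\phi|_{P'}$ yields $\ed(G;p)\le r-1$.

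The hard part will be the converse (c)$\,\Rightarrow\,\ed(G;p)\ge r$. My strategy is to invoke the formula $\ed(G;p)=\min\rank(\ker\phi')$ over all $p$-presentations (\cite[Corollary 5.1]{LMMR}) and argue that $\phi$ achieves this minimum. Supposing for contradiction $\ed(G;p)<r$, a minimal $p$-presentation $\phi^{\min}\colon P^{\min}\to X(G)$ has $\rank P^{\min}<\rank P$. I plan to combine $\phi$ and $\phi^{\min}$ via the $\Gamma_p$-equivariant fibre product $Q=P\times_{X(G)} P^{\min}$, which contains $\ker\phi$ and $\ker\phi^{\min}$ as complementary direct summands, and then use the rank discrepancy between them to extract an element of $\ker\phi$ whose coefficient on some $\lambda^*\in\Lambda^{\Gamma_p}$ is coprime to $p$ --- contradicting (c) via the computation in paragraph~2. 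The main technical obstacle I expect is that $\ker\phi^{\min}$ need not itself be a trivial $\Gamma_p$-module when $\ed(G;p)>p-2$, so the Auslander--Plesken argument cited in the proof of Proposition~\ref{prop.ed<=r<=p-2} is unavailable, and I will need a more delicate analysis of the augmentation quotient $\bar{(-)}$ applied to $Q$.
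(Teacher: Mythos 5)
Your upper bound via \cite[Corollary 5.1]{LMMR}, your proof of (b)$\Leftrightarrow$(c) (the orbit-by-orbit computation in $\overline{P}=P/(pP+IP)$, using $\ker\phi\subseteq P^{\Gamma_p}$ and that non-singleton $\Gamma_p$-orbits have size a positive power of $p$), and your proof of $\neg$(c)$\Rightarrow\neg$(a) are all correct and essentially match the paper's: since $\lambda^*$ is $\Gamma_p$-fixed, $\ZZ\lambda^*$ is a rank-one transitive summand of $P$ and your $P'$ is its complement, so deleting $\lambda^*$ is exactly the paper's step of dropping a transitive summand of $P$ when $\bar\phi$ fails to be injective.

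The genuine gap is the converse, (c)$\Rightarrow$(a), which you correctly identify as the hard part but do not actually prove. Two specific problems: first, the claim that $Q=P\times_{X(G)}P^{\min}$ contains $\ker\phi$ and $\ker\phi^{\min}$ as \emph{complementary} direct summands is false whenever $\dim G>0$, since $\rank Q=\rank P+\rank P^{\min}-\rank X(G)$ exceeds $\rank\ker\phi+\rank\ker\phi^{\min}$ by $\rank X(G)$. Second, ``rank discrepancy'' plus an analysis of $\overline{(\,\cdot\,)}$ cannot by itself produce an element of $\ker\phi$ with a unit coefficient on some $\lambda^*\in\Lambda^{\Gamma_p}$: surjectivity of $\overline{\phi^{\min}}$ only gives $\dim_{\FF_p}\overline{X(G)}\le\rank P^{\min}$, which is perfectly compatible with $\rank P^{\min}<\rank P$. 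The missing idea (the one the paper uses) is a lifting argument: after composing with multiplication by the prime-to-$p$ index $d=[X(G):\phi(P)]$ one may assume $\phi^{\min}(P^{\min})\subseteq\phi(P)$; then $\mathrm{Ext}^1_{\ZZ\Gamma_p}(P^{\min},\ZZ^r)=0$ by \cite[Lemma 2.5.1]{Lor} --- this uses only that $\ker\phi\cong\ZZ^r$ is a \emph{trivial} lattice, which is the hypothesis of the proposition, so your worry about $\ker\phi^{\min}$ not being trivial is beside the point --- and this vanishing splits $0\to\ker\phi\to Q\to P^{\min}\to 0$, i.e., yields $\psi'\colon P^{\min}\to P$ with $\phi\circ\psi'=\phi^{\min}$. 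Since (b) says $\bar\phi$ is an isomorphism and $\overline{\phi^{\min}}$ is surjective by Lemma~\ref{lem.p-pres}, $\overline{\psi'}$ is surjective, so $\psi'$ has finite cokernel and $\rank P^{\min}\ge\rank P$, the desired contradiction. Without this factorization step your argument for (c)$\Rightarrow$(a) does not close.
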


\begin{proof}
$(a)\Leftrightarrow(b)$: We have a commutative diagram:
$$
\xymatrix{
1 \ar[r] &\ZZ^r \ar[r]\ar[d]& P \ar[r]^{\phi}\ar[d]\ar[rd]& X(G)\ar[d]\\
             &(\ZZ/p\ZZ)^r \ar[r]& \overline{P} \ar[r]^{\bar{\phi}}& \overline{X(G)} 
}
$$
with exact rows. By Lemma~\ref{lem.p-pres}, $\bar{\phi}$ is a surjection. Therefore $\ker \phi \subseteq pP+IP$ if and only if $\bar{\phi}$ is an isomorphism. \par
Write $P$ as a direct sum $P\simeq \bigoplus_{j=1}^m P_j$ of transitive permutation $\ZZ\Gamma_p$-modules $P_1,\dotsc,P_m$. Then $P/(pP+IP)\simeq \bigoplus_{j=1}^m P_j/(pP_j + IP_j)\simeq (\ZZ/p\ZZ)^m$.
If $\bar{\phi}$ is not an isomorphism we can replace $P$ by the direct sum $\hat{P}$ of only $m-1$ $P_j$'s without losing surjectivity of $\bar{\phi}$. The composition $\hat{P}\hookrightarrow P \to X(G)$ is then still a $p$-presentation of $X(G)$, by Lemma~\ref{lem.p-pres}. Hence $\ed(G;p)\leq \rank \hat{P}-\dim G < \rank P - \dim G = r$. \par
Conversely assume that $\bar{\phi}$ is an isomorphism. Let $\psi \colon P'\to X(G)$ be a $p$-presentation such that $\ed(G;p)=\rank \ker \psi$. Let $d$ be the index $[X(G):\phi(P)]$, which is finite and prime to $p$. Since the map $X(G)\to d\cdot X(G), x\mapsto dx$  is an isomorphism we may assume that the image of $\psi$ is contained in $\phi(P)$. We have an exact sequence $\Hom_{\ZZ\Gamma_p}(P',P)\to \Hom_{\ZZ\Gamma_p}(P',\phi(P))\to \mathrm{Ext}_{\ZZ\Gamma_p}^1(P',\ZZ^r)$ and the last group is zero by \cite[Lemma 2.5.1]{Lor}. 
Therefore $\psi=\phi \circ \psi'$ for some map $\psi'\colon P'\to P$ 
of $\ZZ\Gamma_p$-modules. Since $\bar{\phi}$ is an isomorphism 
and $\psi$ is a $p$-presentation it follows from Lemma~\ref{lem.p-pres} 
that $\psi'$ is a $p$-presentation as well, and in particular 
that $\rank P' \geq \rank P$. Thus 
$\ed(G;p)=\rank \ker \psi \geq \rank \ker \phi = r$.
\par \medskip
$(b)\Leftrightarrow(c)$: It suffices to show that $P^{\Gamma_p}\cap (pP+IP)$ consists precisely of the elements of $P^{\Gamma_p}$ of the form $\sum_{\lambda \in \Lambda} a_\lambda \lambda$ with $a_\lambda\equiv 0 \pmod p$ for all $\lambda \in \Lambda^{\Gamma_p}$, for any permutation $\ZZ\Gamma_p$-module $P$. One easily reduces to the case where $P$ is a transitive permutation module. Then $P^{\Gamma_p}$ consists precisely of the $\ZZ$-multiples of $\sum_{\lambda \in \Lambda} \lambda$ and $pP+IP$ are the elements $\sum_{\lambda \in \Lambda} a_\lambda \lambda$ with $\sum_{\lambda \in \Lambda}a_\lambda\equiv 0 \pmod p$. Thus for $n \in \ZZ$ the element $n\sum_{\lambda \in \Lambda}\lambda$ lies in $pP+IP$ iff $n\cdot |\Lambda|\equiv 0 \pmod p$, iff $n\equiv 0 \pmod p$ or $|\Lambda|\equiv 0 \pmod p$. Since $|\Lambda|$ is a power of $p$ the claim follows.
\end{proof}

\begin{exa}
Let $E$ be an \'etale algebra over a field $k$. It can be written as
$E=\ell_1\times\cdots\times \ell_m$ with some separable field extensions
$\ell_i/k$. The kernel of the norm map $n\colon \R_{E/k}(\Gm)\to \Gm$ is denoted by
$\R_{E/k}^{(1)}(\Gm)$. Let $G=n^{-1}(\mu_{p^r})$ for some $r\geq 0$. It is a group of multiplicative type fitting into an exact sequence $$1 \to \R_{E/k}^{(1)}(\Gm) \to G \to \mu_{p^r}\to 1.$$ Let $\ell$ be a finite Galois extension of $k$ containing $\ell_1,\dotsc,\ell_m$ (so $\ell$ splits $G$), $\Gamma=\Gal(\ell/k)$, $\Gamma_{\ell_i}=\Gal(\ell/\ell_i)$ and $\Gamma_p$ a $p$-Sylow subgroup of $\Gamma$. The character module of $G$ has a $p$-presentation
\[ P\colonequals \bigoplus_{i=1}^m\ZZ[\Gamma/\Gamma_{\ell_i}] \to X(G), \]
with kernel generated by the element $( p^r, \cdots , p^r) \in P$. This element is fixed by $\Gamma_p$, so $\ed(G;p) \leq 1$. It satisfies condition (c) of Proposition \ref{prop.ed=r} iff $r >0$ or every $\Gamma_p$-set $\Gamma/\Gamma_{\ell_i}$ is fixed-point free. Note that $\Gamma/\Gamma_{\ell_i}$ has $\Gamma_p$-fixed points if and only if $[\ell_i:k]=|\Gamma/\Gamma_{\ell_i}|$ is prime to $p$. 
We thus have
\[ \ed(G;p)=\left\{
\begin{array}{ll}
0, &\mbox{if $r=0$ and $[\ell_i:k]$ is prime to $p$ for some $i$,} \\
1, &\mbox{otherwise}.
\end{array}\right. \]
\end{exa}



\end{document}